\documentclass[12pt]{amsart}
\usepackage{amsmath,amssymb,amsthm,amscd}
\usepackage[margin=1.15in]{geometry}
\usepackage[colorlinks=true,linkcolor=blue,citecolor=blue,urlcolor=blue]{hyperref}
\newtheorem{theorem}{Theorem}[section]
\newtheorem{lemma}[theorem]{Lemma}

\newtheorem{corollary}[theorem]{Corollary}
\theoremstyle{remark}
\newtheorem{remark}[theorem]{Remark}
\DeclareMathOperator{\CH}{CH}
\DeclareMathOperator{\Ch}{Ch}
\DeclareMathOperator{\Spin}{Spin}
\DeclareMathOperator{\im}{Im}
\DeclareMathOperator{\cl}{cl}
\newcommand{\F}{\mathbb F_2}
\newcommand{\St}{\operatorname{St}}
\newcommand{\Sq}{\operatorname{Sq}}

\title[Integral Weyl invariants of spin and Clifford groups]{Integral Weyl Invariants in Chow Characteristic Images of Spin and Special Clifford Groups}
\author{Sanghoon Baek}

\address{Department of Mathematical Sciences, KAIST, 291 Daehak-ro,
Yuseong-gu, Daejeon 34141, Republic of Korea}
\email{sanghoonbaek@kaist.ac.kr}
\urladdr{https://mathsci.kaist.ac.kr/~sbaek/}

\thanks{This work was supported by the National Research Foundation of Korea (NRF) grant funded by the Korean government (MSIT) (No.~RS-2026-25471364).}
\date{}
\subjclass[2020]{14C25, 20G15}
\keywords{Chow ring of a classifying space, spin group, special Clifford group, Weyl invariants, Steenrod operations, integral Hodge conjecture, algebraic cycles modulo torsion}
\hypersetup{
  pdftitle={Integral Weyl Invariants in Chow Characteristic Images of Spin and Special Clifford Groups},
  pdfauthor={Sanghoon Baek},
  pdfkeywords={Chow ring of a classifying space, spin group, special Clifford group, Weyl invariants, Steenrod operations, integral Hodge conjecture, algebraic cycles modulo torsion}
}

\begin{document}

\begin{abstract}
Let $G=\Spin(n)$ be the split spin group over an arbitrary field, with
$n\ge7$. Extending a Steenrod-theoretic obstruction of Karpenko, we
classify the recursively defined integral Weyl invariants $q_i$ in the
Benson--Wood generating set that lie in the Chow characteristic image:
the only such invariant is $q_3$ for $\Spin(10)$. We obtain the analogous
classification for the recursive invariants $f_i$ of the special Clifford
group $\Gamma^+(n)$: in their finite generating range, the only such
invariant is $f_2$ for $\Gamma^+(7)$. Over $\mathbb C$, the class
corresponding to $q_i$ in the torsion-free quotient of the integral
cohomology of the classifying space $BG$ is algebraic precisely when
$(n,i)=(10,3)$. For each $n$, a single smooth projective approximation
simultaneously realizes all the corresponding classes in the finite
range. Every nonexceptional class remains nonalgebraic after the addition
of any torsion class, as detected by a Bockstein--Steenrod operation.
\end{abstract}

\maketitle

\section{Introduction}
Let $G=\Spin(n)$ be the split spin group over a field $k$, with
$n\ge7$, and let $T\subset G$ be a split maximal torus with Weyl group $W$. We use
Totaro's Chow ring $\CH(BG)$ of the classifying space $BG$
\cite{TotaroChow}. Restriction along $T\subset G$ gives the Chow
characteristic homomorphism
\[
  \Phi_n:\CH(BG)\longrightarrow \CH(BT)^W.
\]
Let $T^*$ denote the character group of $T$, and let
$\operatorname{Sym}(T^*)$ denote the symmetric $\mathbb Z$-algebra on
$T^*$. When discussing
singular cohomology, we use the same symbols $G$ and $T$ for the
corresponding compact Lie group and its maximal torus. For every
$d\ge0$, algebraic and topological first Chern classes give canonical
$W$-equivariant identifications
\[
  \CH^d(BT)
  \simeq \operatorname{Sym}^d(T^*)
  \simeq H^{2d}(BT;\mathbb Z).
\]
Thus $\Phi_n$ takes values in an explicitly computable ring of Weyl
invariants.

Choose the standard characters $x_1,\ldots,x_m,z\in T^*$, where
$m=\lfloor n/2\rfloor$ and $2z=x_1+\cdots+x_m$. Then
\[
  \CH(BT)=\mathbb Z[z,x_1,\ldots,x_m]/(2z-x_1-\cdots-x_m).
\]
Set $c_j=e_j(x_1,\ldots,x_m)$,
$p_j=e_j(x_1^2,\ldots,x_m^2)$, and
$P_m:=\mathbb Z[p_1,\ldots,p_m]$. Set $r(n)=m-3$ when
$4\mid n$, and $r(n)=m-2$ otherwise. Benson and Wood proved that $\CH(BT)^W$ is generated over $P_m$ by the
recursively defined invariants $q_1,\ldots,q_{r(n)}$, together with the
Euler class $c_m$ when $n$ is even and one additional generator. By the
orbit product of $z$, we mean the product of the distinct elements in the
$W$-orbit of $z$. The additional generator is a square root of this orbit
product when $n$ is odd or divisible by $4$, and is the orbit product
itself when $n\equiv2\pmod4$
\cite[Theorem~7.1]{BensonWood}.

Several of the other generators are already understood. The Pontryagin
classes belong to $\im\Phi_n$ since, up to signs, they are the images of
the even Chern classes of the standard orthogonal representation. The
orbit product of $z$ is the image under $\Phi_n$ of the top Chern class
of the spin representation when $n$ is odd and of a half-spin
representation when $n$ is even. It therefore belongs to $\im\Phi_n$.
Karpenko proved that the additional square-root generator does not lie
in the image for odd $n\ge7$ or for $n\equiv0\pmod4$
\cite[Theorems~2.2 and 3.2]{KarpenkoEnvelopes}. Consequently, the
additional Benson--Wood generator lies in $\im\Phi_n$ precisely when
$n\equiv2\pmod4$. For the Euler class, Karpenko proved
$c_m\notin\im\Phi_n$ for even $n\ne10$, whereas
$c_5\in\im\Phi_{10}$; see
\cite[Theorem~3]{KarpenkoClassifying}, as corrected in
\cite[Appendix~A, especially Lemma~A.1]{KarpenkoModuloTorsion}.

The algebraic characteristic map $\Phi_n$ contrasts sharply with the
topological restriction map
\begin{equation}\label{eq:topological-restriction}
  H^*(BG;\mathbb Z)/\mathrm{torsion}
  \longrightarrow H^*(BT;\mathbb Z)^W.
\end{equation}
The rational restriction theorem shows that
\eqref{eq:topological-restriction} is injective. Benson and
Wood proved that it is surjective exactly when
$n\not\equiv3,4,5\pmod8$; in the remaining congruence classes, the additional generator is replaced by its double and its square. In
particular, every $q_i$ in the range $1\le i\le r(n)$ lies in the image
\cite[Theorem~10.2]{BensonWood}.

Using Steenrod operations, Karpenko proved that
\[
  2^i+1<\frac n2
  \quad\Longrightarrow\quad
  q_i\notin\im\Phi_n
\]
\cite[Theorem~3]{KarpenkoClassifying}. We complete the classification beyond
this numerical range and determine all the $q_i$ appearing in Benson and
Wood's generating set.

\begin{theorem}\label{thm:classification}
Let $G=\Spin(n)$ with $n\ge7$, and let $1\le i\le r(n)$. Then
\[
  q_i\in\im\Phi_n
  \quad\Longleftrightarrow\quad
  (n,i)=(10,3).
\]
\end{theorem}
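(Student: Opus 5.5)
Two things must be shown: (i) $q_i\notin\im\Phi_n$ for every $(n,i)\ne(10,3)$ with $1\le i\le r(n)$, and (ii) $q_3\in\im\Phi_{10}$.

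\textbf{Recollection of the objects.} Recall Benson--Wood's recursion. One has $q_1=(c_1^2-\ldots)/2$-type expressions, and more precisely
\[
q_{i+1}=\frac{q_i^2-\text{(a polynomial in the $p_j$)}}{2},
\]
so that modulo $2$ the class $q_i$ reduces to a specific element of $\CH(BT)/2$. These mod $2$ reductions are computed as polynomials in $z$ and the elementary symmetric functions $c_j$ modulo $2$.

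\textbf{Obstruction (part (i)).} The strategy extends Karpenko's. Steenrod operations $\Sq^{2j}$ act on $\CH(BG)/2$ and commute with restriction to $T$. Hence the image of $\Phi_n$ reduced mod $2$ is stable under the total Steenrod operation. In $\CH(BT)/2$ the action is explicit: $\Sq(x)=x+x^2$ on characters.

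The key is a \emph{second} constraint. An element of $\im\Phi_n$ lifting a given mod $2$ class is determined only up to $2\,\CH(BT)^W$ and up to $P_m$-combinations of lower-degree image elements. The test I would use is therefore the following: find an operation $\Sq^{2j}$ with
\[
\Sq^{2j}(\bar q_i)\notin \bigl(\text{subring generated mod }2\text{ by }\bar p_j,\ \bar c_m,\ \text{lower }\bar q\text{'s in the image},\ \text{orbit product}\bigr)+\text{decomposables}.
\]
Equivalently, following Karpenko's argument, the $\Sq^{2^{i}}$-type operation applied to $\bar q_i$ produces a term $c_{2^i+1}$-like monomial in the $c_j$ (mod $2$). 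Such a term cannot appear in the Steenrod closure of image classes.

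Karpenko's hypothesis $2^i+1<n/2$ guarantees that the relevant symmetric function $c_{2^{i}+1}$ is nonzero and independent. Beyond this range, i.e.\ $2^i+1\ge m$ with $i\le r(n)\approx m-2$, the class $c_{2^i+1}$ vanishes or coincides with the Euler class. There I would replace it by a different operation, $\Sq^{2^{i-1}}$ or a Bockstein-composite $Q$-type operation, and track lower-order terms involving $z$ and $c_m$. Since $2^i\ge m-1$ and $i\le m-2$ force $m$ small (roughly $m\le 6$), this remaining range is finite: $n\le 13$ or so, i.e.\ $n\in\{7,\dots,13\}$ with the finitely many admissible $i$. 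I would treat these by direct computation. For each case, write $\CH^d(BT)^W/2$ in low degree and compute the subspace $V_d$ spanned by reductions of classes in the image. The classes generating $V_d$ are the Chern classes of the standard and (half-)spin representations and their Steenrod/product closure; invariance of $\bar q_i$ under these is tested by showing that some $\Sq^{2j}\bar q_i$ lands outside $\Sq^{2j}V_d+V_{d+j}$-closure. The main obstacle is precisely this step: proving non-membership requires a lower-bound description of the obstruction space, not just of $V_d$. I would use that every element of $\im\Phi_n$ has total Steenrod image lying in $\im\Phi_n/2$, together with an upper bound on $\im\Phi_n/2$ in the relevant degree obtained from the Benson--Wood generators already known not to be in the image (Karpenko's results for $c_m$ and the square-root generator). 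The induction runs on degree.

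\textbf{Existence (part (ii)).} For $(10,3)$, I would exhibit $q_3$ explicitly as a $\mathbb Z$-combination of $\Phi_{10}$ of Chern classes of the half-spin representation (dimension $16$, weights $z$-orbit) and the vector representation, in degree $4$ or $8$ as appropriate. The natural candidate is a suitable Chern class $c_k$ of the half-spin representation, corrected by polynomials in $p_j$ and $c_5$. Since $c_5\in\im\Phi_{10}$ by Karpenko's correction, this allows correction by $c_5$ terms. The check is a finite identity in $\mathbb Z[x_1,\dots,x_5,z]/(2z-\sum x_i)$, verified by expanding the elementary symmetric functions of the $16$ weights $\tfrac12(\pm x_1\pm\cdots\pm x_5)$ with an even number of minus signs.
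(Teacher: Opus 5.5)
The Steenrod-stability framework is right, but part~(i) fails at your reduction to finitely many cases. The conditions $2^i\ge m-1$ and $i\le m-2$ do not bound $m$. They hold for every $i$ with $\log_2(m-1)\le i\le r(n)$, for instance $(m,i)=(10,4)$ or $(100,50)$. So Karpenko's range $2^i+1<n/2$ covers only about $\log_2 m$ of the indices $1\le i\le r(n)$. The remaining pairs $(n,i)$ form an infinite family in which $c_{2^i+1}$ vanishes in rank $m$, and direct computation for $n\le 13$ cannot reach them.

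What you need is a detecting monomial that survives truncation to rank $m$ uniformly in $i$. The paper gets it from $\rho(q_i)=\epsilon(F_i)$ and the recursion $\epsilon(F_{i+1})=\mathcal D(F_i)$, where $\mathcal D(H)=\epsilon\bigl((H(p)-H^2)/2\bigr)$ and $p_j=c_j^2+2\delta_j$. A coefficient recurrence (Lemma~\ref{lem:common-coefficient}), started from $[c_2c_6]\rho(q_3)=1$, shows that $c_2c_6^{(2^i-2)/6}$ for odd $i$, resp.\ $c_4c_6^{(2^i-4)/6}$ for even $i$, occurs in $\rho(q_i)$ with coefficient $1$. Hence $\St^1(\rho(q_i))$ contains $c_3c_6^{(2^i-2)/6}$, resp.\ $c_5c_6^{(2^i-4)/6}$. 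These involve only $c_2,\dots,c_6$, so they survive in every rank $m\ge6$.

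Your non-membership test is also aimed at the wrong target. You want to bound $\rho(\im\Phi_n)$ by the subring generated by reductions of Benson--Wood generators known to lie in the image, and that bound is false. For $n=10$ one has $\rho(q_3)=c_3c_5=\St^3(c_5)\in\rho(\im\Phi_{10})$. Yet $c_3c_5$ is not in the subring generated by the $\rho(p_j)=c_j^2$, by $c_5$, and by the orbit product, since nothing of degree~$3$ can multiply $c_5$. Steenrod operations and the torsion index produce image classes outside this subring.

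The paper compares instead with all of $\im\rho=\rho(\CH(BT)^W)$, which suffices by parity (Lemma~\ref{lem:odd-image-constraint}). Every Benson--Wood generator except the Euler class has even degree. So an odd-degree element of $\F[c_2,\dots,c_m]\cap\im\rho$ vanishes when $n$ is odd or $4\mid n$, and lies in $c_m\F[c_2,\dots,c_m]$ when $n\equiv2\pmod4$. An odd Steenrod operation producing a nonzero such element then gives the obstruction:
\begin{itemize}
\item $\St^1$ generically;
\item $\St^3$ for $(9,2)$ and $(10,2)$, where $\St^1(c_4)=c_5$ is $0$ or $c_m$;
\item $\St^7$ for $(11,3)$, where $\St^1(c_3c_5)=0$.
\end{itemize}

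In part~(ii), the $c_5$-corrections do nothing in degree~$8$, because $\CH^3(BT)^W=0$ in rank~$5$. You are then relying on an exact integral identity you have no reason to expect. The missing ingredient is Totaro's theorem that the torsion index~$2$ of $\Spin(10)$ gives $2\CH(BT)^W\subset\im\Phi_{10}$. With it, matching $\rho(q_3)=c_3c_5$ modulo~$2$ suffices; the paper does this with $\St^3$ of Karpenko's lift of $c_5$. Given that ingredient, your candidate also works. Modulo~$2$ the half-spin weights are $z+v$, with $v$ running once over the $\F$-span of the $x_j$. Since $\prod_v(T+v)$ is additive, the eighth Chern class of a half-spin representation reduces to the degree-$8$ Dickson invariant, which equals $c_3c_5+c_4^2+c_2^4$ in rank~$5$.
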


The exceptional inclusion uses the Euler class and the torsion index; all
other cases are detected by explicit Steenrod operations. Together with the
known results above, the theorem determines exactly which generators in
Benson and Wood's standard generating set lie in $\im\Phi_n$.

Now assume that $k=\mathbb C$. For every $d\ge0$, let $\cl:\CH^d(BG)\longrightarrow H^{2d}(BG;\mathbb Z)$ denote the integral cycle-class map. Each $q_i$, $1\le i\le r(n)$, has a
unique preimage in
$H^{2^{i+1}}(BG;\mathbb Z)/\mathrm{torsion}$ under
\eqref{eq:topological-restriction}. We call a class $u\in H^{2d}(BG;\mathbb Z)/\mathrm{torsion}$ \emph{algebraic modulo torsion} if it lies in the image of the composite
\[
  \CH^d(BG)\xrightarrow{\cl}H^{2d}(BG;\mathbb Z)
  \longrightarrow H^{2d}(BG;\mathbb Z)/\mathrm{torsion}.
\]
We use the same terminology for classes on smooth complex varieties.

\begin{corollary}\label{cor:topological-lifts}
Assume that the base field is $\mathbb C$. For $1\le i\le r(n)$, let
$u_i\in H^{2^{i+1}}(BG;\mathbb Z)/\mathrm{torsion}$ be the unique class
restricting to $q_i$. Then $u_i$ is algebraic modulo torsion if and
only if $(n,i)=(10,3)$.
\end{corollary}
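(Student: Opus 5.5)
The corollary will follow from Theorem~\ref{thm:classification} once we check that restriction to the torus is compatible with the cycle class map. There is a commutative square
\[
\begin{CD}
\CH^d(BG) @>\cl>> H^{2d}(BG;\mathbb Z)\\
@V\Phi_nVV @VVV\\
\CH^d(BT)^W @>\sim>> H^{2d}(BT;\mathbb Z)^W .
\end{CD}
\]
The bottom arrow is the identification through $\operatorname{Sym}^d(T^*)$ from the introduction. The square commutes because both restrictions are pullbacks along the same morphism $BT\to BG$, computed on compatible finite-dimensional approximations (Totaro's construction), and the cycle class map is natural for pullbacks. The bottom arrow is an isomorphism because first Chern classes of characters agree in the algebraic and topological settings, and $\cl$ is a ring map.

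Torsion classes restrict to zero in $H^*(BT;\mathbb Z)$, which is torsion-free. So the right vertical arrow factors through the restriction map \eqref{eq:topological-restriction}, and that map is injective by the rational restriction theorem.

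Now suppose $u_i$ is algebraic modulo torsion. Then $u_i$ is the image of $\cl(\alpha)$ for some $\alpha\in\CH^{2^i}(BG)$. By commutativity of the square, $\Phi_n(\alpha)$ maps to the restriction of $u_i$, which is $q_i$. Hence $q_i\in\im\Phi_n$, and Theorem~\ref{thm:classification} forces $(n,i)=(10,3)$.

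Conversely, take $(n,i)=(10,3)$. Theorem~\ref{thm:classification} gives $\alpha$ with $\Phi_{10}(\alpha)=q_3$. The image of $\cl(\alpha)$ in $H^{16}(BG;\mathbb Z)/\mathrm{torsion}$ also restricts to $q_3$. By injectivity of \eqref{eq:topological-restriction}, this image equals $u_3$. So $u_3$ is algebraic modulo torsion.

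The only point needing care is the commutativity of the square, specifically that it holds over $\mathbb Z$ and not only rationally. This reduces to naturality of $\cl$ for pullbacks between smooth varieties approximating $BT$ and $BG$, together with the fact that $\cl$ sends $c_1$ to $c_1$. I expect no further obstacle; all the real content is in Theorem~\ref{thm:classification}.
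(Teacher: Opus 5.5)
Your proposal is correct and is essentially the paper's own argument: the commutative square from naturality of $\cl$, the vanishing of torsion classes in the torsion-free $H^*(BT;\mathbb Z)$, and the injectivity of \eqref{eq:topological-restriction} together show that $u_i$ is algebraic modulo torsion exactly when $q_i\in\im\Phi_n$, and then Theorem~\ref{thm:classification} finishes. Your extra remarks on why the square commutes integrally spell out what the paper compresses into the phrase ``naturality of the cycle-class maps.''
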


\begin{proof}
Naturality of the cycle-class maps gives a commutative square relating
$\Phi_n$ to topological restriction. Any torsion class restricts to zero in
$H^*(BT;\mathbb Z)$, which is torsion-free. Hence an algebraic
representative of $u_i$ modulo torsion restricts exactly to $q_i$.
Conversely, the injectivity of \eqref{eq:topological-restriction}
identifies the cycle class of any Chow lift of $q_i$ with $u_i$ modulo
torsion. Thus $u_i$ is algebraic modulo torsion if and only if
$q_i\in\im\Phi_n$, and the result follows from
Theorem~\ref{thm:classification}.
\end{proof}

The Steenrod operations used to prove Theorem~\ref{thm:classification}
also yield a smooth projective realization of this algebraicity
classification.

\begin{corollary}\label{cor:hodge-counterexamples}
For every $n\ge7$, there exist a smooth projective complex variety $X_n$
and non-torsion integral Hodge classes
\[
  \alpha_{n,i}\in H^{2^{i+1}}(X_n;\mathbb Z),
  \qquad 1\le i\le r(n),
\]
such that $\alpha_{n,i}$ is algebraic modulo torsion if and only if
$(n,i)=(10,3)$. In the exceptional case, $\alpha_{10,3}$ itself may be
chosen algebraic.
\end{corollary}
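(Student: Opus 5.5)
We will use the standard Totaro--Atiyah--Hirzebruch approximation of $BG$ by smooth projective varieties, in the form used by Totaro and by Benoist--Ottem for torsion-free counterexamples to the integral Hodge conjecture. Fix $n\ge7$ and set $N=2^{r(n)+1}$, the largest relevant degree. Choose a faithful representation $V$ of $G$ and a smooth projective complete intersection $X_n\subset \mathbb P(V^{\oplus s})/G$, or a Godeaux--Serre type quotient of a complete intersection by a finite approximation. By Lefschetz-type arguments, the classifying map $f:X_n\to BG$ then induces isomorphisms on integral cohomology in degrees $\le N+2$, and a surjection $\CH^d(BG)\to\CH^d(X_n)$ for $2d\le N$.

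Concretely, following Ekedahl's approach, we take a smooth projective variety $X_n$ with a map to $BG$ such that $H^j(BG;\mathbb Z)\to H^j(X_n;\mathbb Z)$ is an isomorphism for $j\le N+2$. We would also like $\CH^d(BG)\to\CH^d(X_n)$ to be surjective in the needed range. That surjectivity may fail, however, so we do not rely on it.

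We set $\alpha_{n,i}=f^*\tilde u_i$, where $\tilde u_i\in H^{2^{i+1}}(BG;\mathbb Z)$ is any lift of $u_i$ from Corollary~\ref{cor:topological-lifts}. These classes are Hodge classes because $H^*(BG)$ is of Tate type, so their pullbacks are of type $(p,p)$. They are non-torsion because the injectivity in the cohomology range preserves the torsion-free quotient. In the exceptional case we would instead take $\alpha_{10,3}=\cl(f^*y)$, where $y\in\CH^4(B\Spin(10))$ is a Chow preimage of $q_3$ supplied by Theorem~\ref{thm:classification}. Its cycle class equals $f^*\tilde u_3$ modulo torsion, so it is algebraic.

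For nonalgebraicity when $(n,i)\ne(10,3)$, we cannot simply pull back the Chow statement, since Chow groups of $X_n$ need not come from $BG$. Instead we use the obstruction underlying Theorem~\ref{thm:classification}: an odd-degree integral Bockstein--Steenrod operation $\beta\Sq^{2a}$, or more generally a Milnor operation $Q_j$ applied to the mod $2$ reduction, vanishes on every algebraic class modulo torsion. The reason is that on an algebraic cycle $\Sq^{\mathrm{odd}}$ of the reduction kills it (via Brosnan/Voevodsky Steenrod operations and the commutation of $\cl$ with them), and the operation is chosen to vanish on reductions of torsion classes in the relevant degree. That is, the operation is an Atiyah--Hirzebruch type obstruction. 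The paper's proof of the theorem presumably exhibits such an operation $\theta$ with $\theta(\bar u_i+\bar t)\ne0$ in $H^*(BG;\F)$ for all torsion $t$, which matches the abstract's phrase ``detected by a Bockstein--Steenrod operation''. Since $f^*$ is injective on mod $2$ cohomology up to degree $N+2$, and we may enlarge the range to cover the target degree of $\theta$, we get $\theta(f^*\bar u_i+\bar t')\ne0$ for every torsion $t'$ on $X_n$. The torsion of $X_n$ in the relevant degree comes from $BG$ by the isomorphism.

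The main obstacle is this last point: making sure the Steenrod-theoretic obstruction is genuinely topological and insensitive to torsion, so that it transfers to $X_n$. The Chow-level argument of the theorem by itself does not. We would verify it by computing $\theta$ on the torsion subgroup of $H^{2^{i+1}}(B\Spin(n);\mathbb Z)$ using Quillen's description of $H^*(B\Spin(n);\F)$. We would also check that the target degree of $\theta$ lies below the connectivity bound, choosing the approximation dimension large enough that a single $X_n$ handles all $i\le r(n)$ at once.
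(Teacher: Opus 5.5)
Your proposal has a genuine gap, located at the step you yourself flag as the main obstacle. The approximation you start from is also wrong as stated. Since $\Spin(n)$ is simply connected, $H^2(B\Spin(n);\mathbb Z)=0$. Every positive-dimensional smooth projective variety, however, has a non-torsion hyperplane class in $H^2$, so no morphism $X_n\to B\Spin(n)$ can induce integral isomorphisms through degree $N+2$. Projective quotients such as $\mathbb P(V^{\oplus s})/G$ approximate the classifying space of $G$ enlarged by the scalars, not of $G$. This is why one approximates $BH$ for $H=\mathbb G_m\times\Spin(n)$, the setting of Ekedahl's theorem (positive-dimensional center), and pulls the $\widetilde u_i$ back along $BH\to BG$. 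But then, by K\"unneth, the torsion in $H^{2^{i+1}}(X_n;\mathbb Z)\cong H^{2^{i+1}}(BH;\mathbb Z)$ also contains products $t^a\tau$. Here $t$ generates $H^2(B\mathbb G_m;\mathbb Z)$ and $\tau$ is a torsion class of $B\Spin(n)$ of lower degree. So your claim that the relevant torsion ``comes from $BG$'' fails, and checking $\theta$ only on the torsion of $H^{2^{i+1}}(B\Spin(n);\mathbb Z)$ would not suffice even if carried out.

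More importantly, you never show that some Bockstein--Steenrod operation is nonzero on every class $\bar u_i+\bar\sigma$ with $\sigma$ torsion. You only posit this and propose an unexecuted computation with Quillen's presentation. The missing idea is that no computation on torsion is needed. Contrary to your remark, the obstruction from Theorem~\ref{thm:classification} is already topological and torsion-blind, because it is a statement about $W$-invariants in the torsion-free ring $H^*(BT;\mathbb Z)$. Put $S=\mathbb G_m\times T$, and let $x\in H^{2^{i+1}}(BH;\mathbb Z)$ be any class restricting to $p^*q_i$ on $BS$. Every pullback of $\widetilde u_i$ plus a torsion class qualifies, since torsion dies in the torsion-free $H^*(BS;\mathbb Z)$. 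If $\beta\Sq^{2\ell}(\bar x)=0$, then $\Sq^{2\ell}(\bar x)$ is the reduction of an integral class whose restriction to $BS$ is $W$-invariant. Pulling back along the section $BT\to BS$ and using $\cl\circ\St^\ell=\Sq^{2\ell}\circ\cl$ yields an element of $H^*(BT;\mathbb Z)^W$ reducing to $\St^\ell(\rho(q_i))$. This contradicts $\St^\ell(\rho(q_i))\notin\im\rho$, which the proof of Theorem~\ref{thm:classification} establishes with $\ell\in\{1,3,7\}$; the argument is Lemma~\ref{lem:bockstein-obstruction}. Two further facts complete the proof on a single $X_n$: $\beta\Sq^{2\ell}$ vanishes on reductions of algebraic classes, and the classifying map is injective on integral cohomology up to degree $2^{r(n)+1}+15$. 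Together these give non-algebraicity modulo torsion. (Minor: the Chow lift of $q_3$ lies in $\CH^8(B\Spin(10))$, not $\CH^4$.)
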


Cohomology-operation obstructions to algebraicity originate with Atiyah--Hirzebruch and were developed further by Totaro
\cite{AtiyahHirzebruch,TotaroCobordism}. Non-torsion cohomology classes
that remain non-algebraic modulo torsion in codimension~$2$ were later
constructed in related Hodge, Tate, and classifying-space settings by
Pirutka--Yagita, Antieau, and Kameko
\cite{PirutkaYagita,AntieauTate,KamekoCycleMap}. Here all the $q_i$ for a fixed $n$ are treated uniformly and realized
simultaneously on a single smooth projective variety.

Let $\widetilde G=\Gamma^+(n)$ be the split special Clifford group over
$k$, and let $\widetilde T\subset\widetilde G$ be a split maximal torus
containing $T$. Its Weyl group is naturally identified with $W$. Write
\[
  \widetilde\Phi_n:\CH(B\widetilde G)\longrightarrow
  \CH(B\widetilde T)^W
\]
for the characteristic homomorphism. Characteristic images for even special
Clifford groups were studied by Karpenko. In particular, the image for
$\Gamma^+(12)$ was determined, and general constraints in higher even ranks
were obtained \cite[Theorems~3.16 and 4.9]{KarpenkoSpecialClifford}.
Following Karpenko and Merkurjev
\cite[\S4, formulas~(4.1)--(4.2)]{KarpenkoMerkurjevSpin}, we consider
recursively defined $W$-invariants $f_i\in\CH(B\widetilde T)^W$. We recall the recursion and fix the resulting representatives in
Section~5. The members of this family occurring in the corresponding finite
generating set are $f_1,\ldots,f_{s(n)}$, where $s(n)=m-1$ for odd $n$
and $s(n)=m-2$ for even $n$; see
\cite[Proposition~4.3, Proposition~5.1, and
Corollary~5.2]{KarpenkoMerkurjevSpin}.

\begin{theorem}\label{thm:clifford-classification}
For $n\ge7$ and $1\le i\le s(n)$, one has
\[
  f_i\in\im\widetilde\Phi_n
  \quad\Longleftrightarrow\quad
  (n,i)=(7,2).
\]
\end{theorem}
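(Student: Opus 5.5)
We will follow the strategy used for Theorem~\ref{thm:classification}, transported to $\widetilde G=\Gamma^+(n)$. There $\widetilde T$ has character lattice generated by $x_1,\ldots,x_m$ and a character $t$ (the spinor norm direction), and the $f_i$ are defined by the Karpenko--Merkurjev recursion (Section~5). The argument has two halves: an explicit construction in the single case $(n,i)=(7,2)$, and a uniform Steenrod obstruction in all other cases.

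For the exclusion, we use the fact that $\im\widetilde\Phi_n$ is stable under the mod~$2$ Steenrod operations. These operations act on $\CH(B\widetilde T)/2$ through the total operation $\Sq(x)=x+x^2$ on characters. They commute with restriction and preserve the image of $\CH(B\widetilde G)/2$. That image is generated over the subring of classes coming from Chern classes of the standard, spin and scalar representations by a known list. We intend to find, for each $(n,i)\neq(7,2)$, an operation (first trying $\Sq^{2^{i}}$, or a Bockstein-modified variant in the top cases) whose value on $f_i$ modulo $2$ produces a monomial term, in suitable reduced coordinates, that no element of $\im\widetilde\Phi_n/2$ in that degree can contain. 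Concretely, we reduce modulo $2$ and modulo the ideal generated by the images of the Chern classes of the standard representation and of $t$. This reduction kills all known image elements except multiples of the reduction of $f_i$ itself. It then suffices to show that $\Sq^{2^i}(f_i)$ is nonzero in the quotient, while $\Sq^{2^i}$ of any candidate preimage is zero there.

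The key computational step is a closed formula for $\Sq(f_i)\bmod 2$ derived from the recursion, presumably $f_{i+1}\equiv\Sq^{2^{i}}(f_i)$ plus correction terms, in the style of the identity used for $q_i$. This would reduce everything to checking whether a specific element $f_{i+1}$-like term survives in $\mathbb F_2[x_1,\ldots,x_m]$ modulo the images of the $p_j$ and related classes. We expect the main obstacle to be the boundary indices $i=s(n)$ and $i=s(n)-1$, where $2^{i+1}$ is close to the rank and Karpenko's numerical inequality fails. In those cases the operation lands in a degree where the top Euler or orbit-product classes could interfere. We will handle them by direct computation for small $n$ ($n=7,8,9,10$) and by restricting to the subgroup $\Gamma^+(n-2)\times\mathbb G_m$ for an inductive reduction in $n$.

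For the inclusion $(7,2)$, we compare with $\Spin(7)$. Here $\Gamma^+(7)$ is $(\Spin(7)\times\mathbb G_m)/\mu_2$, and the $8$-dimensional spin representation twisted by the scalar character gives a representation of $\Gamma^+(7)$. We will show that $f_2$, of degree $4$, equals an explicit polynomial in the Chern classes of this representation and of the standard representation. Because $\widetilde\Phi_7$ is a ring map, we expect the torsion-index argument to reduce to checking that a specific integer combination of $c_2$, $c_4$ and $c_1^{4}$ of the twisted spin representation equals $f_2$ exactly. This is a finite computation on the $W$-orbit of $z+t$.
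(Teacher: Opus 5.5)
Your treatment of $(7,2)$ uses the same representation $\Delta$ of $\Gamma^+(7)$ as the paper, and that part is essentially sound. The exclusion half, however, has a genuine gap.

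\textbf{The proposed operation cannot obstruct anything.} In your Chow indexing, $\Sq^{2^i}$ applied to the degree-$2^i$ class $\bar f_i$ is the top operation, i.e.\ squaring. Now $f_i^2=2f_{i+1}+\sum_r a_r^2$, and $\sum_r a_r^2=A_i(x_1^2,\ldots,x_m^2)$ is a polynomial in the Pontryagin classes; by Lemma~\ref{lem:clifford-reduction} and Frobenius, $\bar f_i^{\,2}=\overline{E_i(p)}$. The $p_j$ come from the vector representation of $\Gamma^+(n)$, so $\Sq^{2^i}(\bar f_i)$ always lies in the image.

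For the same reason there is no formula ``$f_{i+1}\equiv\Sq^{2^i}(f_i)$ plus corrections''. The class $f_{i+1}$ is the divided square $(f_i^2-\sum_r a_r^2)/2$, a mod-$4$ phenomenon that mod-$2$ Steenrod operations cannot see. The paper instead pushes information through this divided square, $\epsilon(E_{i+1})=\mathcal D(E_i)$. It tracks only the coefficients of $c_2c_6^{(2^i-2)/6}$ ($i$ odd) and $c_4c_6^{(2^i-4)/6}$ ($i$ even), which stay equal to $1$ (Lemma~\ref{lem:common-coefficient}). Only then does it apply $\St^1$, which turns these into $c_3c_6^{(2^i-2)/6}$ and $c_5c_6^{(2^i-4)/6}$.

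\textbf{The detection step lacks its key idea.} On $\widetilde T$, $f_0$ is a degree-one class coming from a character of $\widetilde G$, so odd-degree image classes are plentiful and no parity argument is available. Factoring out the $p_j$ and $t$ does not isolate $\bar f_i$, because the reductions of the other invariants survive ($f_j$ for $j\ne i$, products, the orbit class). Also, the mod-$2$ image of $\CH(B\widetilde G)$ is not known by a list; the correct target in the Steenrod criterion is the reduction of the whole invariant ring.

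The paper restricts to the spin torus: $f_i\in\im\widetilde\Phi_n$ implies $\varphi_i=f_i|_T\in\im\Phi_n$. There $f_0\mapsto 0$ and $c_1\equiv0$, and the Benson--Wood generators give Lemma~\ref{lem:odd-image-constraint}: a nonzero odd-degree element of $\F[c_2,\ldots,c_m]$ outside $c_m\F[c_2,\ldots,c_m]$ is not in $\im\rho$. Since the detected monomials involve only $c_2,\ldots,c_6$, this settles all $i\ge4$ with $m\ge6$ at once, including the top index $i=s(n)$. The remaining cases are explicit: $i\le2$ via Theorem~\ref{thm:classification} and $\St^3$ for $n=8$, and $i=3$ and $(11,4)$ via Lemma~\ref{lem:clifford-low-indices}.

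Your induction along $\Gamma^+(n-2)\subset\Gamma^+(n)$ cannot replace this. It only transports indices $i\le s(n-2)=s(n)-1$, and it cannot give $(9,2)$ from the exceptional $(7,2)$. So the top index is new for every $n$, and finitely many direct computations leave infinitely many cases open.

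\textbf{On $(7,2)$.} Your torsion-index-free idea does work. From $2f_1=f_0^2-p_1$ and $2f_2=f_1^2-p_2$, the identity $c_4(\Delta)|_{\widetilde T}=f_2+f_1^2+6f_0^2f_1+f_0^4$ yields
$f_2=3c_4(\Delta)+p_2-13f_0^4+11f_0^2p_1-p_1^2$.
This requires the Pontryagin classes, however. The combination of $c_2$, $c_4$ and $c_1^4$ of $\Delta$ that you name cannot equal $f_2$; indeed no integer polynomial in the Chern classes of $\Delta$ alone does, as comparing coordinates in the $\mathbb Q$-basis $f_0^4,f_0^2p_1,p_1^2,p_2$ shows. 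The paper instead uses torsion index $2$ to absorb $f_1^2=2f_2+p_2$ and $6f_0^2f_1$.
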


The non-inclusions follow by restriction to the spin torus and the common
coefficient recurrence developed in Section~3; the exceptional inclusion for
$\Gamma^+(7)$ uses the fourth Chern class of the spin representation and the
torsion index. Sections~4 and~5 then prove the spin and special Clifford
classifications, respectively, with the Hodge-theoretic consequences included
in Section~4.

\section{Preliminaries}
We use the notation of the Introduction. All Chow rings are graded by
codimension; thus $\deg c_j=j$ and $\deg p_j=2j$, and the formal
polynomial rings below carry the same grading. We put $c_0=p_0=1$.
In rank $m$, one has $c_j=p_j=0$ for $j>m$. In formal polynomial
calculations, however, these truncation relations are imposed only when
the result is interpreted in rank $m$.

Write $\Ch(-):=\CH(-)/2\CH(-)$. Reduction modulo two gives
$c_1=x_1+\cdots+x_m=0$ in $\Ch(BT)$, while
$c_2,\ldots,c_m$ remain algebraically independent
\cite[Example~3.1]{KarpenkoModuloTorsion}.

Let $\St=\sum_{r\ge0}\St^r$ denote the total cohomological Steenrod
operation on mod-two Chow groups. Its $r$th component is a natural
operation
\[
  \St^r:\Ch^d(-)\longrightarrow\Ch^{d+r}(-).
\]
In characteristic different from $2$, we use Brosnan's operations, and
in characteristic $2$, Primozic's motivic construction
\cite{Brosnan,Primozic}. On classifying spaces, these operations are
defined degreewise using Totaro's finite-dimensional approximations
\cite{TotaroChow}.

Let $\rho:\CH(BT)^W\hookrightarrow\CH(BT)\longrightarrow\Ch(BT)$ be the composite of the inclusion with reduction modulo two. Naturality
of Steenrod operations gives a commutative square whose horizontal
arrows are induced by the inclusion $T\hookrightarrow G$:
\[
\begin{CD}
  \Ch(BG) @>>> \Ch(BT)\\
  @V{\St}VV     @VV{\St}V\\
  \Ch(BG) @>>> \Ch(BT).
\end{CD}
\]
Indeed, suppose that $x=\Phi_n(a)$ and let $\bar a$ be the reduction
of $a$. Choose $b\in\CH(BG)$ whose reduction modulo two is
$\St(\bar a)$. Naturality gives $\rho\bigl(\Phi_n(b)\bigr)=\St\bigl(\rho(x)\bigr)$. Since $\Phi_n(b)\in\CH(BT)^W$, it follows that
\begin{equation}\label{eq:steenrod-stability-criterion}
  x\in\im\Phi_n
  \quad\Longrightarrow\quad
  \St(\rho(x))\in\im\rho.
\end{equation}
This is the Steenrod-stability criterion used in Karpenko's argument
\cite[Proposition~1]{KarpenkoClassifying}. We shall mainly use
$\St^1$; in the small-rank cases below, we also use $\St^3$ and
$\St^7$.

For computations, recall that $\St$ is a ring homomorphism and sends
the first Chern class $u$ of every torus character to $u(1+u)$. Its
homogeneous components satisfy the Cartan formula
\begin{equation}\label{eq:cartan}
  \St^r(ab)=\sum_{p+q=r}\St^p(a)\St^q(b).
\end{equation}
For $0\le r\le j$, the Wu--Borel formula in $\Ch(BT)$ is
\begin{equation}\label{eq:steenrod-cj}
  \St^r(c_j)
  =\sum_{k=0}^{r}\binom{r-j}{k}c_{r-k}c_{j+k},
\end{equation}
where the generalized binomial coefficient is taken modulo $2$
(with $\binom{-d}{k}=(-1)^k\binom{d+k-1}{k}$ for $d>0$). For
$r>j$, one has $\St^r(c_j)=0$. See \cite{WuSquares}, \cite[Th\'eor\`eme~7.1]{BorelSteenrod}, and \cite[\S4]{KarpenkoModuloTorsion}.

Taking $r=1$ in \eqref{eq:steenrod-cj} and using $c_1=0$ gives
\begin{equation}\label{eq:St1-cj}
  \St^1(c_j)=c_{j+1}\quad(j\text{ even}),
  \qquad
  \St^1(c_j)=0\quad(j\text{ odd}).
\end{equation}

For the remainder of this section, assume that the base field is
$\mathbb C$. We write $\cl$ for either the integral or the mod-two
cycle-class map, as determined by its source and target, and use a bar
for reduction modulo two. The mod-two cycle-class map gives a canonical
$W$-equivariant identification $\Ch^d(BT)\simeq H^{2d}(BT;\mathbb F_2)$, compatible with the integral identification in the Introduction. Under
these identifications, $\rho$ corresponds to reduction modulo two.

Let $\beta:H^k(-;\mathbb F_2)\longrightarrow H^{k+1}(-;\mathbb Z)$ denote the Bockstein homomorphism associated with
\[
  0\longrightarrow\mathbb Z\xrightarrow{\times2}\mathbb Z
  \longrightarrow\mathbb F_2\longrightarrow0.
\]
For every smooth complex variety $Y$, compatibility of algebraic and
topological Steenrod operations gives
\begin{equation}\label{eq:brosnan-comparison}
  \cl\bigl(\St^j(a)\bigr)
  =\Sq^{2j}\bigl(\cl(a)\bigr),
  \qquad a\in\Ch^d(Y);
\end{equation}
see \cite[Corollary~9.12]{Brosnan}. The same identity applies to
classifying spaces degreewise via smooth finite-dimensional
approximations.

Consequently, if $a\in\CH^d(Y)$ and
$\alpha=\cl(a)\in H^{2d}(Y;\mathbb Z)$, then
\begin{equation}\label{eq:algebraic-bockstein-vanishing}
  \beta\Sq^{2j}(\bar\alpha)=0
  \qquad(j\ge0).
\end{equation}
Indeed, choose $w\in\CH^{d+j}(Y)$ lifting $\St^j(\bar a)$. Then
\eqref{eq:brosnan-comparison} gives $\Sq^{2j}(\bar\alpha)=\overline{\cl(w)}$, and exactness of the Bockstein sequence proves
\eqref{eq:algebraic-bockstein-vanishing}.

\section{Image and coefficient constraints}
We record two general tools used in both the spin and special Clifford cases.

\begin{lemma}\label{lem:odd-image-constraint}
Set $R_m:=\F[c_2,\ldots,c_m]\subset\Ch(BT)$, and let
$u\in R_m \,\cap\,\im\rho$ be homogeneous of odd degree. If $n$ is odd or
$4\mid n$, then $u=0$; if $n=2m$ with $m$ odd, then $u\in c_mR_m$.
\end{lemma}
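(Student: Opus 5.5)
The plan is to read off the odd-degree part of $\im\rho$ from the Benson--Wood generating set recalled in the Introduction. By \cite[Theorem~7.1]{BensonWood}, $\CH(BT)^W$ is generated as a ring by $p_1,\ldots,p_m$, the invariants $q_1,\ldots,q_{r(n)}$, the Euler class $c_m$ when $n$ is even, and one additional generator. Since $\rho$ is a ring homomorphism, $\im\rho$ is the $\F$-subalgebra of $\Ch(BT)$ generated by the reductions of these elements. So the first step is a degree count for each generator.

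\begin{itemize}
\item Each $p_j$ has degree $2j$.
\item Each $q_i$ lies in codimension $2^i$ with $i\ge1$, as the Introduction's placement of its lift in $H^{2^{i+1}}$ shows.
\item The $W$-orbit of $z$ has $2^m$ elements for odd $n$ and $2^{m-1}$ for even $n$, since the sign changes act on $z$ through $(\mathbb Z/2)^m$, respectively through its even-parity subgroup.
\item Hence the extra generator has degree $2^{m-1}$ for odd $n$, degree $2^{m-2}$ for $4\mid n$, and degree $2^{m-1}$ for $n\equiv2\pmod4$. All of these are even because $m\ge3$; when $4\mid n$ we even have $m\ge4$.
\item The Euler class $c_m$ has degree $m$.
\end{itemize}

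Thus every generator has even degree, except $c_m$ in the case $n=2m$ with $m$ odd.

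In the first case ($n$ odd or $4\mid n$), $\im\rho$ is generated by even-degree elements, so it is concentrated in even degrees. Any homogeneous odd-degree $u\in\im\rho$ is therefore $0$, and the assumption $u\in R_m$ is not even needed. In the second case, $\im\rho=A[\bar c_m]$ with $A$ concentrated in even degrees, where $A$ is generated by the reductions of the even-degree generators. Since $c_m$ has odd degree, every odd-degree homogeneous element of $\im\rho$ has the form $c_m v$ with $v\in\Ch(BT)$, in fact $v\in A[\bar c_m]$.

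The remaining point, which I expect to be the only real obstacle, is to upgrade $u=c_m v$ with $u\in R_m$ to $v\in R_m$; a priori $v$ involves $z$ and the individual $x_j$. I would use a freeness argument. Modulo $2$ the defining relation becomes $c_1=0$, so $\Ch(BT)=\F[x_1,\ldots,x_m]/(c_1)\,[z]$. By the classical fact that $\F[x_1,\ldots,x_m]$ is free over $\F[c_1,\ldots,c_m]$ on a monomial basis containing $1$, the quotient by $c_1$ is free over $R_m=\F[c_2,\ldots,c_m]$ on the images of the same monomials. Adjoining the polynomial variable $z$ preserves freeness, with basis $\{x^a z^b\}$ containing $1$. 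Now expand $v=\sum_\beta r_\beta\,\beta$ in this basis with $r_\beta\in R_m$. The identity $c_m v=u\in R_m=R_m\cdot1$ gives $c_m r_\beta=0$ for $\beta\ne1$. Since $R_m$ is a domain and $c_m\ne0$ in it (the algebraic independence recalled in Section~2), this forces $r_\beta=0$ for $\beta\ne1$. Hence $v=r_1\in R_m$ and $u\in c_mR_m$, as claimed.
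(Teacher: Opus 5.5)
Your proof is correct. Its first half matches the paper's argument: you read off the degrees of the Benson--Wood generators, so $\im\rho$ has no odd-degree part unless $n=2m$ with $m$ odd, and in that case every odd-degree element is a multiple of $c_m$. You also spell out the orbit-size and degree bookkeeping for the extra generator, which the paper only asserts.

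The final step, showing the cofactor of $c_m$ lies in $R_m$, takes a different route.

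\begin{itemize}
\item The paper keeps the cofactor inside the invariants, $u\in c_m\Ch(BT)^W$. It then invokes Karpenko's structure theorem $\Ch(BT)^W=R_m[\theta]$, with $\theta$ the orbit product of $z$. Since $\theta$ is monic in $z$, its powers are $R_m$-linearly independent. So $u=c_m\sum a_j\theta^j\in R_m$ forces $c_ma_j=0$ for $j>0$.
\item You drop $W$-invariance altogether. You use that $\Ch(BT)=(\F[x_1,\dots,x_m]/(c_1))[z]$ is free over $R_m$ on a monomial basis containing $1$. This comes from base-changing the classical monomial basis of $\F[x_1,\dots,x_m]$ over $\F[c_1,\dots,c_m]$ along $c_1\mapsto 0$.
\end{itemize}

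Your route proves the slightly stronger statement $R_m\cap c_m\Ch(BT)=c_mR_m$. It needs only a standard commutative-algebra fact, not the computation of the mod-two Weyl invariants. The paper's route stays inside the invariant ring and reuses the orbit product already present in the Benson--Wood presentation, at the cost of citing Karpenko's description of $\Ch(BT)^W$. Both arguments finish the same way, by cancelling $c_m$ in the domain $R_m$.
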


\begin{proof}
By the Benson--Wood presentation recalled in the Introduction,
$\im\rho$ is generated by the reductions of the Pontryagin classes,
the $q_i$, and the orbit generator, all of which have even degree,
together with the Euler generator $e=c_m$ of degree $m$ when $n$ is
even.

If $n$ is odd, the Euler generator does not occur. If $4\mid n$, then
$m$ is even. Hence, in either case, $\im\rho$ is generated by
homogeneous elements of even degree and therefore has no nonzero
homogeneous element of odd degree. Thus $u=0$.

Suppose that $n=2m$ with $m$ odd. Then $c_m$ is the only odd-degree
generator of $\im\rho$, while all the other generators have even
degree. Choose a homogeneous polynomial expression for $u$ in these
generators. Every monomial occurring in this expression must contain
a factor $c_m$. Hence $u\in c_m\Ch(BT)^W$. By \cite[Proposition~3.2]{KarpenkoModuloTorsion}, $\Ch(BT)^W=R_m[\theta]$, where $\theta$ is the orbit product of $z$ in this case. As a
polynomial in $z$, it is monic of positive degree.

The powers of $\theta$ are linearly independent over $R_m$. Indeed,
since $\Ch(BT)\simeq\F[z,x_2,\ldots,x_m]$, the element $z$ is algebraically independent over $R_m$. If
$a_d\ne0$, then the leading term in $z$ of $\sum_{j=0}^d a_j\theta^j$ comes from $a_d\theta^d$ and has nonzero coefficient $a_d$. The sum is
therefore nonzero. Thus every element of $R_m[\theta]$ has a unique
expansion in powers of $\theta$.

Since $u$ belongs to both $R_m$ and $c_mR_m[\theta]$, write
\[
  u=c_m\sum_{j=0}^d a_j\theta^j,
  \qquad a_j\in R_m.
\]
Because $u\in R_m$, its expansion in powers of $\theta$ is constant.
Uniqueness therefore gives $c_ma_j=0$ for every $j>0$. Since $R_m$ is
a polynomial ring, it is a domain, and $c_m\ne0$. Hence $a_j=0$ for
$j>0$, and consequently $u=c_ma_0\in c_mR_m$, as required.
\end{proof}

Write $C(t)=\prod_{k=1}^m(1+x_kt)=\sum_{j\ge0}c_jt^j$. Since $C(t)C(-t)
  =\prod_{k=1}^m(1-x_k^2t^2)
  =\sum_{j\ge0}(-1)^jp_jt^{2j}$, comparison of the coefficients of $t^{2j}$ gives
\begin{equation}\label{eq:pj-cj}
  p_j=c_j^2+2\delta_j,
  \qquad
  \delta_j=\sum_{s=1}^j(-1)^s c_{j-s}c_{j+s}.
\end{equation}
This is the standard relation between the elementary symmetric
functions in the $x_k$ and those in the $x_k^2$; see
\cite[\S2]{BensonWood}. In formal polynomial calculations, we use the
right-hand side of \eqref{eq:pj-cj} to regard $p_j$ as a polynomial in
$c_1,c_2,\ldots$.

For a polynomial $H$ and a monomial $M$, write $[M]H$ for the
coefficient of $M$. For
$H\in\mathbb Z[c_1,c_2,\ldots]$, set
\[
  \epsilon(H):=\overline H\big|_{c_1=0}
  \in\F[c_2,c_3,\ldots],
\]
where the bar denotes coefficientwise reduction modulo two, and let
$H(p)$ denote the polynomial obtained by substituting $p_j$ for $c_j$.
Since $p_j\equiv c_j^2\pmod2$, the Frobenius identity in characteristic $2$ gives $H(p)\equiv H^2\pmod2$. Thus $H(p)-H^2$ is coefficientwise divisible by $2$, and we define
\[
  \mathcal D(H)
  :=\epsilon\!\left(\frac{H(p)-H^2}{2}\right).
\]

\begin{lemma}\label{lem:common-coefficient}
For $i\ge3$, let $H_i\in\mathbb Z[c_1,c_2,\ldots]$ be homogeneous
of degree $2^i$. Suppose that
$\epsilon(H_{i+1})=\mathcal D(H_i)$ for every $i\ge3$ and that
$[c_2c_6]\epsilon(H_3)=1$. Then, for every $i\ge4$,
\[
  {}[c_3c_6^{(2^i-2)/6}]\St^1\bigl(\epsilon(H_i)\bigr)=1
  \quad(i\text{ odd}),
  \qquad
  {}[c_5c_6^{(2^i-4)/6}]\St^1\bigl(\epsilon(H_i)\bigr)=1
  \quad(i\text{ even}).
\]
\end{lemma}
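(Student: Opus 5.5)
The plan is to reduce the statement about $\St^1$ to a statement about single coefficients of $\epsilon(H_i)$, and then prove that coefficient statement by induction on $i$ using $\mathcal D$.

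\textbf{Step 1: reduction via $\St^1$.} By \eqref{eq:St1-cj} and the Cartan formula \eqref{eq:cartan}, $\St^1$ acts on a monomial in $c_2,c_3,\ldots$ as a derivation. It replaces one factor $c_j$ with $j$ even by $c_{j+1}$ and kills odd-index factors. Put $k=(2^i-2)/6$ for $i$ odd and $k=(2^i-4)/6$ for $i$ even. A monomial $M$ whose $\St^1$-image contains $c_3c_6^k$ (respectively $c_5c_6^k$) must come from such a substitution. Since $c_6\mapsto c_7$ creates a $c_7$, the only possible source is $M=c_2c_6^k$ (respectively $c_4c_6^k$), and each occurs with coefficient $1$. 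So it suffices to prove the following claim: $[c_2c_6^{(2^i-2)/6}]\epsilon(H_i)=1$ for odd $i\ge3$, and $[c_4c_6^{(2^i-4)/6}]\epsilon(H_i)=1$ for even $i\ge4$. The case $i=3$ is exactly the hypothesis $[c_2c_6]\epsilon(H_3)=1$.

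\textbf{Step 2: expanding $\mathcal D$.} Write $H=\sum_M a_MM$ over monomials in $c_1,c_2,\ldots$. Using \eqref{eq:pj-cj}, modulo $4$ one has
\[
M(p)\equiv M^2+2\sum_j e_j(M)\,\delta_j\,M^2/c_j^2 .
\]
Hence, modulo $2$,
\[
\frac{H(p)-H^2}{2}\equiv\sum_M\frac{a_M-a_M^2}{2}M^2+\sum_M a_M\sum_j e_j(M)\,\delta_jM^2/c_j^2-\sum_{M<N}a_Ma_NMN ,
\]
and afterwards we set $c_1=0$.

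The first sum involves $a_M$ modulo $4$, and the second involves monomials containing $c_1$. Neither is controlled by $\epsilon(H_i)$, so the main obstacle is to show that neither contributes to the target monomials. I will do this by parity of exponents and by degrees modulo $6$.

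\textbf{Step 3: the induction step.}

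For $i$ odd with $k=(2^i-2)/6$, the target in degree $2^{i+1}$ is $c_4c_6^{2k}$, because $(2^{i+1}-4)/6=2k$.
\begin{itemize}
\item It is not a square, since $c_4$ occurs to the first power, so the first sum does not contribute.
\item In the $\delta$-term, $\delta_jM^2/c_j^2$ is a square times $c_{j-s}c_{j+s}$. The odd-exponent part of the target is $c_4$. This forces $c_{j-s}c_{j+s}=c_0c_4$, so $j=2$, $s=2$, $M=c_2c_6^k$ and $e_2=1$. Any $M$ containing $c_1$ leaves a factor $c_1$, or yields $c_0c_2$ from $\delta_1$; neither matches the target.
\item A cross term $MN=c_4c_6^{2k}$ with $M\ne N$ would need some $M=c_4c_6^a$ of degree $2^i$. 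But $4+6a\equiv4\not\equiv2^i\equiv2\pmod 6$, so there is none.
\end{itemize}
Hence $[c_4c_6^{2k}]\epsilon(H_{i+1})=[c_4c_6^{2k}]\mathcal D(H_i)=a_{c_2c_6^k}\equiv1$.

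For $i$ even with $k=(2^i-4)/6$, the target in degree $2^{i+1}$ is $c_2c_6^{2k+1}$, and the argument is the same.
\begin{itemize}
\item The target is not a square.
\item Its odd part is $c_2c_6$, which forces $c_{j-s}c_{j+s}=c_2c_6$. Then $j=4$, $s=2$, $M=c_4c_6^k$ and $e_4=1$. The option $c_0c_2$ from $j=1$ would need a square part $c_6^{2k+1}$, which is impossible.
\item A cross term would need $M=c_2c_6^a$ of degree $2^i$. But $2+6a\equiv2\not\equiv4\pmod 6$, so there is none.
\end{itemize}
Hence $[c_2c_6^{2k+1}]\epsilon(H_{i+1})=1$.

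This completes the induction, and Step 1 then gives the lemma. The signs $(-1)^s$ are irrelevant modulo $2$. The parity-and-degree bookkeeping in Step 3, which shows that the data not controlled by $\epsilon(H_i)$ never reaches the target monomials, is the only delicate point.
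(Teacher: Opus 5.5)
Your proposal is correct and follows essentially the same route as the paper. Both arguments first use the derivation property of $\St^1$ to reduce the claim to the coefficients of $c_2c_6^{(2^i-2)/6}$ and $c_4c_6^{(2^i-4)/6}$ in $\epsilon(H_i)$. Both then propagate these coefficients through the mod-$4$ expansion of $\mathcal D$, ruling out the square terms, the stray $\delta$-terms and the cross terms by exponent parity and by degrees modulo $6$. Your ``odd-exponent part'' bookkeeping is only a repackaging of the paper's ``$u$ divides $T$ with square quotient'' step.
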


\begin{proof}
Fix $i\ge3$ and write
\[
  H_i=\sum_M\alpha_M M,
  \qquad
  M=\prod_{r=1}^{2^i}c_r^{a_r},
  \qquad
  a_r\in\mathbb Z_{\ge0},
  \qquad
  \sum_{r=1}^{2^i}ra_r=2^i.
\]
Fix any total order on the monomials occurring in $H_i$. Since $H_i^2=\sum_M\alpha_M^2M^2+
2\sum_{M<N}\alpha_M\alpha_NMN$, we have
\begin{equation}\label{eq:Hi-decomposition}
  \frac{H_i(p)-H_i^2}{2}
  =\sum_M\alpha_M\frac{M(p)-M^2}{2}
  +\sum_M\frac{\alpha_M-\alpha_M^2}{2}M^2
  -\sum_{M<N}\alpha_M\alpha_NMN.
\end{equation}
By \eqref{eq:pj-cj}, binomial expansion modulo $4$ gives
\begin{equation}\label{eq:M-expansion}
  \frac{M(p)-M^2}{2}\equiv
  \sum_{a_r>0}a_r\delta_r\bigl(M/c_r\bigr)^2\pmod2.
\end{equation}
The middle sum in \eqref{eq:Hi-decomposition} consists entirely of square
monomials and therefore cannot contribute to either nonsquare target considered
below. Thus, using $\epsilon(H_{i+1})=\mathcal D(H_i)$, the coefficient
$[T]\epsilon(H_{i+1})$ of a nonsquare monomial $T$ can receive
contributions only from terms of the form
$\alpha_Ma_r\delta_r(M/c_r)^2$ or $\alpha_M\alpha_NMN$ with $M<N$.

Suppose first that $i$ is odd, and put $a=(2^i-2)/6$ and
$T=c_4c_6^{2a}$. If a term $\delta_r(M/c_r)^2$ contributes to
$[T]\epsilon(H_{i+1})$, then some monomial $u$ occurring in $\delta_r$
satisfies $u(M/c_r)^2=T$. Thus $u$ divides $T$ and $T/u$ is a monomial
square. Since every monomial of $\delta_r$ has the form
$c_{r-s}c_{r+s}$, with $c_0=1$, the only monomials of this form that divide $T$,
up to sign, are $c_4$, $c_6$, and $c_4c_6$, occurring in $\delta_2$,
$\delta_3$, and $\delta_5$, respectively. Only $u=c_4$ leaves a square
quotient, since $T/c_4=c_6^{2a}=(c_6^a)^2$, whereas $T/c_6$ and
$T/(c_4c_6)$ are not squares. Hence $r=2$ and $M/c_2=c_6^a$, so
$M=c_2c_6^a$.

No term $MN$ with $M<N$ contributes to $[T]\epsilon(H_{i+1})$. Indeed,
if $MN=T$, then, since $c_4$ occurs in $T$ with exponent one, one of
$M,N$ has the form $c_4c_6^b$. Its degree is
$4+6b\equiv4\pmod6$, contradicting the fact that every monomial of
$H_i$ has degree $2^i\equiv2\pmod6$. Consequently, only $M=c_2c_6^a$ and $r=2$ can contribute to
$[T]\epsilon(H_{i+1})$. By \eqref{eq:Hi-decomposition} and
\eqref{eq:M-expansion}, the corresponding summand before applying
$\epsilon$ is $[c_2c_6^a]H_i\cdot\delta_2c_6^{2a}$. Since
$\epsilon(H_{i+1})=\mathcal D(H_i)$, the uniqueness established above
and $\epsilon(\delta_2)=c_4$ give
\[
  [T]\epsilon(H_{i+1})
  =\overline{[c_2c_6^a]H_i}
  =[c_2c_6^a]\epsilon(H_i).
\]

Now suppose that $i$ is even, and put $a=(2^i-4)/6$ and
$T=c_2c_6^{2a+1}$. As above, if a term $\delta_r(M/c_r)^2$ contributes
to $[T]\epsilon(H_{i+1})$, then some monomial $u$ occurring in
$\delta_r$ satisfies $u\mid T$ and $T/u$ is a square. Up to sign, the only monomials of this form that divide $T$ are
$c_2$, $c_6$, and $c_2c_6$, occurring in
$\delta_1$, $\delta_3$, and $\delta_4$, respectively. Only $u=c_2c_6$
leaves a square quotient, since
$T/(c_2c_6)=c_6^{2a}=(c_6^a)^2$. Hence $r=4$ and
$M/c_4=c_6^a$, so $M=c_4c_6^a$.

Again, no term $MN$ with $M<N$ contributes to
$[T]\epsilon(H_{i+1})$. If $MN=T$, then one of $M,N$ has the form
$c_2c_6^b$, of degree $2+6b\equiv2\pmod6$, whereas every monomial of
$H_i$ has degree $2^i\equiv4\pmod6$. Consequently, only $M=c_4c_6^a$ and $r=4$ can contribute to
$[T]\epsilon(H_{i+1})$. By \eqref{eq:Hi-decomposition} and
\eqref{eq:M-expansion}, the corresponding summand before applying
$\epsilon$ is $[c_4c_6^a]H_i\cdot\delta_4c_6^{2a}$. Since
$\epsilon(H_{i+1})=\mathcal D(H_i)$, the uniqueness established above
and $[c_2c_6]\epsilon(\delta_4)=1$ give
\[
  [T]\epsilon(H_{i+1})
  =\overline{[c_4c_6^a]H_i}
  =[c_4c_6^a]\epsilon(H_i).
\]

Together with $[c_2c_6]\epsilon(H_3)=1$, these two recurrences imply
\[
  [c_2c_6^{(2^i-2)/6}]\epsilon(H_i)=1\quad(i\text{ odd}),
  \qquad
  [c_4c_6^{(2^i-4)/6}]\epsilon(H_i)=1\quad(i\text{ even}).
\]
Finally, \eqref{eq:St1-cj} and \eqref{eq:cartan} show that the only source monomials contributing to the two coefficients in the statement are,
respectively, $c_2c_6^{(2^i-2)/6}$ and $c_4c_6^{(2^i-4)/6}$. In particular,
$\St^1(c_5)=0$, so no $c_6$-factor can be created from a $c_5$-factor. The
assertion follows.
\end{proof}

\section{The \texorpdfstring{$q_i$}{q\_i}-classification and its consequences}
For $i\ge1$, Benson and Wood \cite[Proposition~3.3]{BensonWood}
construct homogeneous polynomials
$F_i\in\mathbb Z[c_2,\ldots,c_{2^i}]$ of degree $2^i$, with $F_1=c_2$,
together with integral polynomials $g_i$ and $h_i$ satisfying
\begin{equation}\label{eq:BW-relation}
  F_i(p)=F_i^2+4z g_i(z,c)+2F_{i+1},
  \qquad
  q_i=2zh_i-F_i.
\end{equation}
The first examples are $q_1=2z^2-c_2$ and
$q_2=2z^4-2z^2c_2+2zc_3-c_4$. Reducing $q_i=2zh_i-F_i$ modulo two gives
$\rho(q_i)=\epsilon(F_i)$. Moreover, dividing the first identity in
\eqref{eq:BW-relation} by two gives
\begin{equation}\label{eq:q-D-recurrence}
  \bigl(F_i(p)-F_i^2\bigr)/2=2zg_i+F_{i+1},
  \qquad \epsilon(F_{i+1})=\mathcal D(F_i).
\end{equation}
Here the second equality is obtained by first reducing coefficients modulo
$2$, which kills $2zg_i$, and then imposing $c_1=0$.

Since $F_2=c_4$,
\eqref{eq:q-D-recurrence} and \eqref{eq:pj-cj} give
$\rho(q_3)=\epsilon(\delta_4)$. Thus, in every rank $m\ge5$,
\begin{equation}\label{eq:Q3}
  \rho(q_3)=c_8+c_2c_6+c_3c_5,
  \qquad
  \St^1(\rho(q_3))=c_9+c_3c_6+c_2c_7.
\end{equation}
Indeed, $\epsilon(\delta_4)=c_8+c_2c_6+c_3c_5$, and the second identity
follows from \eqref{eq:St1-cj} and \eqref{eq:cartan}.

The following lemma provides the two $\Spin(10)$ inputs used below: a Steenrod
obstruction for classes reducing to $c_4$ and the exceptional inclusion
$q_3\in\im\Phi_{10}$.

\begin{lemma}\label{lem:spin10-input}
Let $G=\Spin(10)$. Then
\[
  \St^3(c_4)=c_3c_4+c_2c_5\notin\im\rho,
\]
and $q_3\in\im\Phi_{10}$. Consequently, if
$x\in\CH(BT)^W$ satisfies $\rho(x)=c_4$, then
$x\notin\im\Phi_{10}$.
\end{lemma}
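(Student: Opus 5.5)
The plan has three parts: compute $\St^3(c_4)$ with the Wu--Borel formula and rule it out of $\im\rho$ by a parity/degree argument; realize $q_3$ through a Steenrod operation applied to the Euler class together with the torsion index; and deduce the final assertion from the Steenrod-stability criterion.

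\emph{Step 1: the formula for $\St^3(c_4)$.} Here $m=5$, so in $\Ch(BT)$ we have $c_1=0$ and $c_j=0$ for $j>5$. Take $r=3$, $j=4$ in \eqref{eq:steenrod-cj}, so that $\binom{r-j}{k}=\binom{-1}{k}=(-1)^k\equiv1$. This gives
\[
\St^3(c_4)=c_3c_4+c_2c_5+c_1c_6+c_7 .
\]
The last two terms vanish (the third because $c_1=0$, the fourth because $7>m$), so $\St^3(c_4)=c_3c_4+c_2c_5$.

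\emph{Step 2: $\St^3(c_4)\notin\im\rho$.} This element lies in $R_5=\F[c_2,\ldots,c_5]$ and has odd degree $7$. Since $n=10=2m$ with $m$ odd, Lemma~\ref{lem:odd-image-constraint} applies. If the element were in $\im\rho$, it would lie in $c_5R_5$. But $R_5$ is a polynomial ring and the monomial $c_3c_4$ is not divisible by $c_5$, so the element is not in $c_5R_5$. This proves the first claim.

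\emph{Step 3: $q_3\in\im\Phi_{10}$.} By \eqref{eq:Q3} in rank $5$ we have $\rho(q_3)=c_3c_5$, since $c_6=c_8=0$. Now apply \eqref{eq:steenrod-cj} with $r=3$, $j=5$. The coefficients are $\binom{-2}{k}$, which are odd only for $k=0$, and in any case the terms $c_{5+k}$ with $k\ge1$ vanish. Hence $\St^3(c_5)=c_3c_5=\rho(q_3)$.

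By Karpenko's result, $c_5\in\im\Phi_{10}$, say $c_5=\Phi_{10}(a)$. Choose $b\in\CH^8(BG)$ lifting $\St^3(\bar a)$. Naturality (the commutative square of Section~2, taken in degree $8$) gives $\rho(\Phi_{10}(b))=\rho(q_3)$. Since $\CH(BT)$ is torsion-free, $q_3-\Phi_{10}(b)=2w$ with $w\in\CH(BT)$, and $w$ is automatically $W$-invariant.

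The torsion index of $\Spin(10)$ is $2$. By Totaro's torsion-index argument (via the transfer from the flag variety), the image of $\Phi_{10}$ contains $2\,\CH(BT)^W$, so $2w\in\im\Phi_{10}$. Therefore $q_3=\Phi_{10}(b)+2w\in\im\Phi_{10}$.

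\emph{Step 4: the final assertion.} Suppose $\rho(x)=c_4$ and $x\in\im\Phi_{10}$. The criterion \eqref{eq:steenrod-stability-criterion}, read in the degree-$7$ component, puts $\St^3(c_4)$ in $\im\rho$. This contradicts Step~2.

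The main point needing care is Step~3. I will have to cite the torsion-index inclusion $t\cdot\CH(BT)^W\subset\im\Phi$ with $t=2$ for $\Spin(10)$ precisely. I will also check that the Steenrod-lift argument is genuinely carried out in $\CH(BG)$, which it is, since $\St$ is defined there degreewise via Totaro's approximations.
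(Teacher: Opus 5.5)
Your proposal is correct and follows the paper's proof essentially step for step. It uses the Wu--Borel computation together with Lemma~\ref{lem:odd-image-constraint} for the non-inclusion. It then lifts $\St^3$ of a preimage of Karpenko's Euler class $c_5$ to match $\rho(q_3)=c_3c_5$, absorbs the resulting even difference via torsion index $2$, and applies the stability criterion for the final assertion. One harmless slip: $\binom{-2}{2}=3$ is odd, so the $k=2$ term $c_1c_7$ does occur in the Wu--Borel sum for $\St^3(c_5)$, but it vanishes anyway (as your ``in any case'' clause covers).
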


\begin{proof}
In rank $5$, \eqref{eq:steenrod-cj} gives $\St^3(c_4)=c_3c_4+c_2c_5$. This is an odd-degree element of $R_5$ not divisible by $c_5$, so
Lemma~\ref{lem:odd-image-constraint} excludes it from $\im\rho$. For the final assertion, let $x\in\CH(BT)^W$ satisfy
$\rho(x)=c_4$. If $x\in\im\Phi_{10}$, then
\eqref{eq:steenrod-stability-criterion} and the grading of
$\im\rho$ would imply $\St^3(\rho(x))=\St^3(c_4)\in\im\rho$, a contradiction.

For the inclusion $q_3\in\im\Phi_{10}$, Karpenko proved that the Euler
class $c_5$ belongs to $\im\Phi_{10}$
\cite[Lemma~A.1]{KarpenkoModuloTorsion}. Naturality and
\eqref{eq:steenrod-cj} therefore show that $\St^3(c_5)=c_3c_5$ lies in
the image of $\Ch(B\!\Spin(10))\to\Ch(BT)$. Choose a homogeneous class
$\bar a\in\Ch^8(B\!\Spin(10))$ whose restriction is $c_3c_5$, and lift $\bar a$ to a class $a\in\CH^8(B\!\Spin(10))$.

Since, in rank $5$, $\rho(q_3)=c_3c_5$ by \eqref{eq:Q3}, the difference
$d:=q_3-\Phi_{10}(a)$ satisfies $\rho(d)=0$. Thus $d=2y$ for some
$y\in\CH(BT)$. As $d$ is $W$-invariant and $\CH(BT)$ is torsion-free,
$2(wy-y)=0$ for every $w\in W$, and hence $y\in\CH(BT)^W$.

The torsion index of $\Spin(10)$ is $2$ and annihilates the cokernel of
$\Phi_{10}$ \cite[Theorems~0.1 and 1.3(1)]{TotaroTorsion}. Therefore
$2\CH(BT)^W\subset\im\Phi_{10}$, so $d\in\im\Phi_{10}$ and
$q_3\in\im\Phi_{10}$.
\end{proof}

\begin{proof}[Proof of Theorem~\ref{thm:classification}]
For every pair $(n,i)$ in the range of the theorem other than
$(10,3)$, we establish one of the following Steenrod obstructions:
\begin{equation}\label{eq:steenrod-nonliftability}
\begin{aligned}
  \St^3(\rho(q_2))&\notin\im\rho
    &&\text{for }(n,i)\in\{(9,2),(10,2)\},\\
  \St^7(\rho(q_3))&\notin\im\rho
    &&\text{for }(n,i)=(11,3),\\
  \St^1(\rho(q_i))&\notin\im\rho
    &&\text{in all other nonexceptional cases.}
\end{aligned}
\end{equation}

For $i=1$, one has $\rho(q_1)=c_2$, so
$\St^1(\rho(q_1))=c_3$. If $n$ is odd or $4\mid n$,
Lemma~\ref{lem:odd-image-constraint} excludes $c_3$ immediately. If
$n=2m$ with $m$ odd, then $m\ge5$ and $c_m\nmid c_3$, so the same
lemma again gives $c_3\notin\im\rho$.

For $i=2$, one has $\rho(q_2)=c_4$. If $m=4$, then
$2\le r(n)$ forces $n=9$, and \eqref{eq:steenrod-cj} gives
$\St^3(c_4)=c_3c_4\notin\im\rho$ by
Lemma~\ref{lem:odd-image-constraint}. Suppose that $m\ge5$ and $n\ne10$. Then
$\St^1(c_4)=c_5$. If $n$ is odd or $4\mid n$, the same lemma excludes
$c_5$ from $\im\rho$. If $n=2m$ with $m$ odd, then $n\ne10$ implies
$m\ge7$, so $c_m\nmid c_5$, and the lemma again gives the exclusion.
The remaining case $n=10$ is covered by
Lemma~\ref{lem:spin10-input}.

For $i=3$ and $m\ge6$, equation~\eqref{eq:Q3} shows that
$\St^1(\rho(q_3))$ contains $c_3c_6$ with coefficient $1$.
It is therefore a nonzero homogeneous element of odd degree in $R_m$.
If $n$ is odd or $4\mid n$, Lemma~\ref{lem:odd-image-constraint}
excludes it from $\im\rho$. If $n=2m$ with $m$ odd, then $m\ge7$,
and the term $c_3c_6$ shows that the class is not divisible by $c_m$;
the same lemma again gives the exclusion.

It remains for $i=3$ to consider $m=5$, where
\eqref{eq:Q3} gives $\rho(q_3)=c_3c_5$. For $n=11$,
\eqref{eq:St1-cj} gives $\St^1(c_3c_5)=0$.
Equations~\eqref{eq:steenrod-cj} and \eqref{eq:cartan} instead give
\[
  \St^7(c_3c_5)=c_5^3+c_2c_3c_5^2+c_3^2c_4c_5.
\]
The displayed class is a nonzero odd-degree element of $R_5$, so
Lemma~\ref{lem:odd-image-constraint} gives
$\St^7(\rho(q_3))\notin\im\rho$ for $n=11$.
The case $n=10$ is the exceptional inclusion of
Lemma~\ref{lem:spin10-input}.

Finally, let $i\ge4$. Since $i\le r(n)$, one has $m\ge6$.
Applying Lemma~\ref{lem:common-coefficient} to $H_j=F_j$, using \eqref{eq:q-D-recurrence} and
$[c_2c_6]\epsilon(F_3)=1$ from \eqref{eq:Q3}, gives
\[
  [c_3c_6^{(2^i-2)/6}]\St^1(\rho(q_i))=1\quad(i\text{ odd}),
  \qquad
  [c_5c_6^{(2^i-4)/6}]\St^1(\rho(q_i))=1\quad(i\text{ even}).
\]
Since the detected monomials involve only $c_2,\ldots,c_6$ and
$m\ge6$, their coefficients are unchanged after setting $c_j=0$ for
$j>m$. Thus $\St^1(\rho(q_i))$ is a nonzero homogeneous
element of odd degree. Lemma~\ref{lem:odd-image-constraint} excludes it
from $\im\rho$ if $n$ is odd or $4\mid n$. If $n=2m$ with $m$ odd,
then $i\le r(n)=m-2$ and $i\ge4$ imply $m\ge7$, and the relevant
monomial above is not divisible by $c_m$; the same lemma again gives the
exclusion. This completes the proof of
\eqref{eq:steenrod-nonliftability}.

If $q_i\in\im\Phi_n$, then \eqref{eq:steenrod-stability-criterion} and
the grading of $\im\rho$ force every homogeneous component of
$\St(\rho(q_i))$ to belong to $\im\rho$.
Equation~\eqref{eq:steenrod-nonliftability} therefore proves
$q_i\notin\im\Phi_n$ whenever $(n,i)\ne(10,3)$. The exceptional inclusion
$q_3\in\im\Phi_{10}$ is Lemma~\ref{lem:spin10-input}.
\end{proof}

\begin{lemma}\label{lem:bockstein-obstruction}
Let $H=\mathbb G_m\times G$ and $S=\mathbb G_m\times T$. Let
$p:BS\to BT$ be induced by the projection and $j:BS\to BH$ by the
inclusion $S\subset H$. If $1\le i\le r(n)$ and $\ell\ge1$ satisfy
$\St^\ell(\rho(q_i))\notin\im\rho$, then every
$x\in H^{2^{i+1}}(BH;\mathbb Z)$ with $j^*x=p^*q_i$ satisfies
\[
  \beta \Sq^{2\ell}(\bar x)\ne0.
\]
\end{lemma}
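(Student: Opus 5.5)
We argue by contradiction: assume $\beta\Sq^{2\ell}(\bar x)=0$ and produce a class in $\im\rho$ equal to $\St^\ell(\rho(q_i))$. Exactness of the Bockstein sequence
\[
  H^{2^{i+1}+2\ell}(BH;\mathbb Z)\xrightarrow{\ \bmod 2\ }H^{2^{i+1}+2\ell}(BH;\mathbb F_2)\xrightarrow{\ \beta\ }H^{2^{i+1}+2\ell+1}(BH;\mathbb Z)
\]
then gives an integral class $y\in H^{2^{i+1}+2\ell}(BH;\mathbb Z)$ with $\bar y=\Sq^{2\ell}(\bar x)$. Restricting along $j$ and using naturality of $\Sq^{2\ell}$, we get $\overline{j^*y}=\Sq^{2\ell}(\overline{p^*q_i})$ in $H^*(BS;\mathbb F_2)$. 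The class $j^*y$ lies in $H^*(BS;\mathbb Z)^W$, where $W$ acts trivially on the $\mathbb G_m$ factor, because inner automorphisms act trivially on $H^*(BH)$.

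Next we pass from $BS$ back to $BT$. Let $s:BT\to BS$ be induced by the inclusion $T=1\times T\subset S$, so that $p\circ s=\mathrm{id}$. Pulling back along $s$ gives $\overline{s^*j^*y}=\Sq^{2\ell}(\overline{q_i})$ in $H^*(BT;\mathbb F_2)$, with $s^*j^*y\in H^*(BT;\mathbb Z)^W=\CH(BT)^W$. Here $s^*$ is $W$-equivariant because $W$ acts compatibly on $S$ and $T$. Under the identifications of Section~2, $\Sq^{2\ell}$ on $H^*(BT;\mathbb F_2)$ corresponds to $\St^\ell$ on $\Ch(BT)$ by \eqref{eq:brosnan-comparison}, and mod-two reduction corresponds to $\rho$. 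Hence $\St^\ell(\rho(q_i))=\rho(s^*j^*y)\in\im\rho$, contradicting the hypothesis.

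The main point to verify is the identification of topological $\Sq^{2\ell}$ with algebraic $\St^\ell$ on $BT$. Since $\Sq$ is a ring homomorphism with $\Sq(u)=u+u^2$ on degree-two classes, it agrees with $\St$ on first Chern classes, and both generate. One could alternatively invoke \eqref{eq:brosnan-comparison} degreewise. We also need $W$-invariance of restrictions from $BH$; this follows because $N(T)$-conjugation on $BH$ is homotopic to the identity.

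Two remarks on the setup. The factor $\mathbb G_m$ plays no real role in this argument beyond the section $s$; presumably it matters later, for the projective approximation. Since the statement only involves $BH$ and $BS$ topologically (over $\mathbb C$, with compact forms), everything above is purely topological apart from the $\Sq$/$\St$ comparison on $BT$.
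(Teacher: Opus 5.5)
Your proposal is correct and is essentially the paper's proof. It uses the same ingredients: Bockstein exactness to lift $\Sq^{2\ell}(\bar x)$ to an integral class, naturality and $W$-invariance of restriction to $BS$, the section $s:BT\to BS$, and the comparison \eqref{eq:brosnan-comparison}. The only difference is the order of steps. The paper first transfers the non-liftability of $\omega=\Sq^{2\ell}(\bar q_i)$ from $BT$ to $BS$ via $s^*p^*=\mathrm{id}$ and then reaches the contradiction on $BS$, whereas you build the integral lift on $BS$ and pull it back along $s$; the two are logically equivalent.
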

\begin{proof}
Let $\omega=\Sq^{2\ell}(\bar q_i)$. Under the $W$-equivariant
cycle-class identifications for $BT$, the map $\rho$ corresponds to
reduction modulo two, while \eqref{eq:brosnan-comparison} identifies
$\St^\ell(\rho(q_i))$ with $\omega$. Hence the hypothesis yields
\begin{equation}\label{eq:topological-nonlift}
  \omega\notin
  \im\!\left(
    H^{2^{i+1}+2\ell}(BT;\mathbb Z)^W
    \longrightarrow H^{2^{i+1}+2\ell}(BT;\mathbb F_2)^W
  \right).
\end{equation}

Let $s:BT\to BS$ be induced by $t\mapsto(1,t)$. Since
$p\circ s=\operatorname{id}_{BT}$, we have $s^*p^*=\operatorname{id}$,
and $s$ is $W$-equivariant. Indeed, if $p^*\omega=\bar\gamma$ for some
$\gamma\in H^{2^{i+1}+2\ell}(BS;\mathbb Z)^W$, then
$\omega=s^*p^*\omega=s^*\bar\gamma=\overline{s^*\gamma}$, while
$s^*\gamma$ is $W$-invariant, contradicting
\eqref{eq:topological-nonlift}. Hence the naturality of Steenrod squares gives
\begin{equation}\label{eq:product-torus-nonlift}
  \Sq^{2\ell}(p^*\bar q_i)=p^*\omega\notin
  \im\!\left(
    H^{2^{i+1}+2\ell}(BS;\mathbb Z)^W
    \longrightarrow H^{2^{i+1}+2\ell}(BS;\mathbb F_2)^W
  \right).
\end{equation}

Suppose that $\beta \Sq^{2\ell}(\bar x)=0$. Exactness of the Bockstein
sequence gives $\eta\in H^{2^{i+1}+2\ell}(BH;\mathbb Z)$ such that
$\bar\eta=\Sq^{2\ell}(\bar x)$. By naturality and $j^*x=p^*q_i$,
\[
  \overline{j^*\eta}
  =j^*\bar\eta
  =j^*\Sq^{2\ell}(\bar x)
  =\Sq^{2\ell}(j^*\bar x)
  =\Sq^{2\ell}(p^*\bar q_i)
  =p^*\omega.
\]
Since $S$ is a maximal torus of $H$, the image of
$j^*:H^*(BH;\mathbb Z)\to H^*(BS;\mathbb Z)$ is contained in
$H^*(BS;\mathbb Z)^W$. Hence $j^*\eta$ is an integral $W$-invariant lift
of $p^*\omega$, contradicting \eqref{eq:product-torus-nonlift}.
\end{proof}

\begin{proof}[Proof of Corollary~\ref{cor:hodge-counterexamples}]
Fix $n\ge7$, put $G=\Spin(n)$, and retain the notation $H,S,p,j$ of
Lemma~\ref{lem:bockstein-obstruction}. Let $\pi:BH\to BG$ be induced by
the projection $H\to G$. For $1\le i\le r(n)$, let $u_i$ be as in
Corollary~\ref{cor:topological-lifts} and choose an integral representative
$\widetilde u_i\in H^{2^{i+1}}(BG;\mathbb Z)$. When $(n,i)=(10,3)$,
take $\widetilde u_3=\cl(a)$ for
$a\in\CH^8(BG)$ with $\Phi_{10}(a)=q_3$, as in
Lemma~\ref{lem:spin10-input}. Put $v_i=\pi^*(\widetilde u_i)$. Since
$u_i$ restricts to $q_i$ and $H^*(BT;\mathbb Z)$ is torsion-free,
every representative $\widetilde u_i$ restricts to $q_i$. Hence $j^*v_i=p^*q_i$. For each nonexceptional pair,
\eqref{eq:steenrod-nonliftability} supplies
$\ell_{n,i}\in\{1,3,7\}$ with
$\St^{\ell_{n,i}}(\rho(q_i))\notin\im\rho$.

Set $k_n=2^{r(n)+1}+15$. Since $H$ is reductive with
positive-dimensional center, Ekedahl's projective approximation theorem
\cite[Theorem~1.3]{EkedahlApproximation} gives a smooth projective complex
variety $X_n$, an algebraic $H$-torsor $P_n\to X_n$, and a classifying
morphism $\theta_n:X_n\to BH$ such that
\[
  \theta_n^*:H^k(BH;\mathbb Z)\xrightarrow{\;\simeq\;}H^k(X_n;\mathbb Z)
  \qquad(k\le k_n).
\]
For $1\le i\le r(n)$, set $\alpha_{n,i}=\theta_n^*v_i$. Since $p$ has
a section, $p^*q_i\ne0$; hence the restriction formula above shows that
$v_i$ is non-torsion, because $H^*(BS;\mathbb Z)$ is torsion-free. As
$2^{i+1}\le k_n$, the class $\alpha_{n,i}$ is also non-torsion.

Each $\alpha_{n,i}$ is an integral Hodge class. Indeed, by
\cite[Theorem~1.3(3)]{TotaroTorsion}, the rational cycle-class map for
$BH$ is an isomorphism. Hence there exists
$b_i\in\CH^{2^i}(BH)\otimes\mathbb Q$ with
$\cl(b_i)=v_i\otimes1$. Naturality gives
$\cl(\theta_n^*b_i)=\alpha_{n,i}\otimes1$, showing that
$\alpha_{n,i}$ has Hodge type $(2^i,2^i)$. In the exceptional case,
$\alpha_{10,3}=\cl(\theta_{10}^*\pi^*a)$ and is algebraic.

Now fix $(n,i)\ne(10,3)$. To prove non-algebraicity modulo torsion, let
$\sigma_X\in H^{2^{i+1}}(X_n;\mathbb Z)$ be an arbitrary torsion class.
Since $\theta_n^*$ is an isomorphism in degree $2^{i+1}$, there is a
unique torsion class $\sigma_H\in H^{2^{i+1}}(BH;\mathbb Z)$ such that
$\theta_n^*\sigma_H=\sigma_X$. Hence $j^*\sigma_H=0$, since
$H^*(BS;\mathbb Z)$ is torsion-free. Setting $x=v_i+\sigma_H$,
Lemma~\ref{lem:bockstein-obstruction} therefore gives
$\beta\Sq^{2\ell_{n,i}}(\bar x)\ne0$.

Since $i\le r(n)$ and $\ell_{n,i}\le7$, the target degree
$2^{i+1}+2\ell_{n,i}+1$ is at most $k_n$. As $\alpha_{n,i}+\sigma_X=\theta_n^*x$, naturality and the
injectivity of $\theta_n^*$ in the target degree give
\[
  \beta\Sq^{2\ell_{n,i}}
    \bigl(\overline{\alpha_{n,i}+\sigma_X}\bigr)
  =\theta_n^*\!\left(
      \beta\Sq^{2\ell_{n,i}}(\bar x)
    \right)
  \ne0.
\]
By \eqref{eq:algebraic-bockstein-vanishing}, the class
$\alpha_{n,i}+\sigma_X$ is not algebraic. Since $\sigma_X$ was arbitrary,
$\alpha_{n,i}$ is not algebraic modulo torsion.
\end{proof}

\section{The \texorpdfstring{$f_i$}{f\_i}-classification for special Clifford groups}

The standard characters $z,x_1,\ldots,x_m$ of $\widetilde T$ give an
identification
\[
  \CH(B\widetilde T)\simeq\mathbb Z[z,x_1,\ldots,x_m].
\]
We use the same symbols for their restrictions to $T$, where
$2z=c_1=x_1+\cdots+x_m$.

Following \cite[\S4, formulas~(4.1)--(4.2)]{KarpenkoMerkurjevSpin}, set
$f_0=2z-c_1=2zg_0+A_0$, where $g_0=1$ and $A_0=-c_1$. Inductively, suppose
that $f_i=2zg_i+A_i$, with
$A_i=f_i|_{z=0}\in\mathbb Z[x_1,\ldots,x_m]$. Write
$A_i=a_1+\cdots+a_t$ in its coefficient expansion as a sum of signed
monomials with unit coefficients, counted with multiplicity and with no
canceling pairs, and define
\begin{equation}\label{eq:fi-square-minus-square}
  f_{i+1}:=2z\bigl(zg_i^2+A_ig_i\bigr)+\sum_{r<s}a_ra_s.
\end{equation}
Equivalently, $2f_{i+1}=f_i^2-\sum_{r=1}^t a_r^2$. The recursion yields homogeneous $W$-invariants $f_i$ of degree $2^i$. Apart from $f_0$, the members of this family occurring in the finite
generating sets are $f_1,\ldots,f_{s(n)}$; see
\cite[Proposition~4.3, Proposition~5.1, and
Corollary~5.2]{KarpenkoMerkurjevSpin}. Throughout, $f_i$ denotes the
polynomial determined by this recursion.

The first two terms are
\[
  f_1=2z^2-2zc_1+c_2,\qquad
  f_2=2z^4-4c_1z^3+2(c_1^2+c_2)z^2-2c_1c_2z+c_1c_3-c_4.
\]
For $i\ge0$, let $\varphi_i\in\CH(BT)^W$ be the image of $f_i$ under
the restriction
$\CH(B\widetilde T)^W\to\CH(BT)^W$. Using $2z=c_1$ on $T$ gives
\[
  \varphi_0=0,
  \qquad \varphi_1=c_2-2z^2=-q_1,
  \qquad \varphi_2=2z^4-2z^2c_2+2zc_3-c_4=q_2.
\]

To describe all reductions, define integral symmetric polynomials
$E_i\in\mathbb Z[c_1,c_2,\ldots]$ by
\begin{equation}\label{eq:clifford-recurrence}
  E_0=c_1,
  \qquad E_{i+1}=\big(E_i^2-E_i(p)\big)/2.
\end{equation}
Here $E_i(p)$ denotes the substitution $c_j\mapsto p_j$ introduced in
Section~3. Inductively, each $E_i$ is an integral homogeneous polynomial of degree
$2^i$. Indeed, since $p_j\equiv c_j^2\pmod2$, one has
$E_i(p)\equiv E_i^2\pmod2$, so the numerator in
\eqref{eq:clifford-recurrence} is coefficientwise divisible by $2$.
Moreover, both squaring and the substitution $c_j\mapsto p_j$ double
degree. The first terms are
\begin{equation}\label{eq:first-Ei}
  E_0=c_1,\qquad E_1=c_2,\qquad E_2=c_1c_3-c_4,
  \qquad
  E_3=\bigl((c_1c_3-c_4)^2-(p_1p_3-p_4)\bigr)/2.
\end{equation}
In rank $m$, we specialize by setting $c_j=0$ for $j>m$.

Let $\widetilde\rho:\CH(B\widetilde T)^W\to\Ch(B\widetilde T)$ be
reduction modulo two.

\begin{lemma}\label{lem:clifford-reduction}
For every $i\ge0$, $\widetilde\rho(f_i)=\overline{E_i}$ and $\rho(\varphi_i)=\epsilon(E_i)$. In particular, $\rho(\varphi_1)=c_2$ and
$\rho(\varphi_2)=c_4$.
\end{lemma}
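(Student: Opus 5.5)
Throughout, $\Ch(B\widetilde T)=\F[z,x_1,\ldots,x_m]$, so a bar means reducing coefficients modulo two. The plan is to prove by induction on $i$ that $f_i\equiv E_i\pmod 2$ coefficientwise in $\mathbb Z[z,x_1,\ldots,x_m]$. Here $E_i$ is read as a symmetric polynomial in the $x_k$ via $c_j=e_j(x)$ and $p_j=e_j(x^2)$. The spin statement then follows by restriction.

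For the induction I would prove the stronger, purely integral statement that $A_i=f_i|_{z=0}$ equals $-E_i$ for $i=0$ and $E_i$ for $i\ge1$, up to an overall sign that is harmless modulo two. Then $f_i=2zg_i+A_i$ gives $\widetilde\rho(f_i)=\overline{A_i}=\overline{E_i}$ at once. The base case is $A_0=-c_1=-E_0$.

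For the inductive step, set $z=0$ in \eqref{eq:fi-square-minus-square}. This gives $A_{i+1}=\sum_{r<s}a_ra_s=\bigl(A_i^2-\sum_r a_r^2\bigr)/2$.

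The key identity is $\sum_r a_r^2=A_i(x_1^2,\ldots,x_m^2)$. This holds because each $a_r$ is $\pm$ a monomial in the $x_k$, so $a_r^2$ is that monomial with every exponent doubled. Summing over the expansion with multiplicity, where each coefficient $c$ contributes $|c|$ copies of $\operatorname{sign}(c)\cdot$monomial, gives $\sum_r a_r^2=\sum_M |c_M|\,M(x^2)$.

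This is the main obstacle: $A_i(x^2)$ carries the signed coefficients $c_M$, whereas the identity produces $|c_M|$. So the identity as stated holds only after reduction modulo $4$ in the sense needed. I would handle this by noting that, exactly as in \eqref{eq:Hi-decomposition}, the discrepancy $(c_M-|c_M|)/2\cdot M(x^2)$ is a square-monomial correction. I would therefore work only modulo two in the halved quantity. That is, I would show $A_{i+1}\equiv\bigl(A_i^2-A_i(x^2)\bigr)/2\pmod 2$, using $c_M\equiv|c_M|\pmod 2$ so that $(c_M-|c_M|)/2=\min(c_M,0)$ is an integer.

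Hmm, that term is not automatically even. So the cleaner route is to track integral equality rather than equality modulo two. I would check from the Karpenko--Merkurjev convention, "unit coefficients with no canceling pairs", that the signed sum is intended, i.e.\ $\sum_r a_r^2$ equals $\sum_M c_M^{\pm}$-weighted squares. Failing that, I would verify directly that the recursion is defined so that $2f_{i+1}=f_i^2-f_i$-Frobenius-lift. Given either reading, $A_i(x^2)$ is obtained from $A_i$, viewed as a polynomial in the $c_j$, by the substitution $c_j\mapsto p_j$, since $e_j(x^2)=p_j$. So $A_{i+1}$ satisfies the same recurrence \eqref{eq:clifford-recurrence} as $E_{i+1}$, up to the sign convention: $(E^2-E(p))/2$ versus $(A^2-A(p))/2$ agree, and the sign of $A_0$ disappears after squaring. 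The induction then closes integrally, and the checks $E_1=c_2$ and $E_2=c_1c_3-c_4$ against the displayed $f_1$ and $f_2$ at $z=0$ confirm the normalization.

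For the second assertion, note that restriction $\CH(B\widetilde T)\to\CH(BT)$ commutes with reduction modulo two. On $\Ch(BT)$ one has $c_1=2z=0$, so $2zg_i$ dies and $\rho(\varphi_i)=\overline{E_i}|_{c_1=0}=\epsilon(E_i)$. Finally, \eqref{eq:first-Ei} gives $\epsilon(E_1)=c_2$ and $\epsilon(E_2)=c_4$.
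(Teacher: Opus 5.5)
There is a genuine gap, located exactly where you flag the obstacle, and your proposal does not close it. You correctly reduce to $A_{i+1}=\bigl(A_i^2-\sum_r a_r^2\bigr)/2$, and you correctly observe that, writing $A_i=\sum_M\alpha_M M$, one has $\sum_r a_r^2=\sum_M|\alpha_M|\,M(x_1^2,\ldots,x_m^2)$. To match the recurrence \eqref{eq:clifford-recurrence} you need this to equal $E_i(p)$ after the substitution $c_j\mapsto e_j(x)$. Neither ``checking the convention'' nor ``verifying that the recursion is a Frobenius lift'' is an argument. The modulo-two variant cannot close on its own either, because $(A^2-A(x^2))/2 \bmod 2$ depends on $A \bmod 4$.

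The missing idea is a positivity induction. First, $A_1=\sum_{r<s}(-x_r)(-x_s)=c_2$ has nonnegative coefficients. Next, suppose $A_i$ is a sum of monomials each with coefficient $+1$, counted with multiplicity. Then every $a_r$ is such a monomial, and $A_{i+1}=\sum_{r<s}a_ra_s$ is again a sum of monomials with coefficient $+1$. Hence $|\alpha_M|=\alpha_M$ for all $i\ge1$, so $\sum_r a_r^2=A_i(x_1^2,\ldots,x_m^2)$ exactly. By the inductive hypothesis, this equals $E_i(p)$ under $c_j\mapsto e_j(x)$, since $e_j(x^2)=p_j$. This yields $A_{i+1}=E_{i+1}$ integrally, which is how the paper argues.

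Your treatment of $i=0$ is also incorrect as written. The sign of $A_0$ does not ``disappear after squaring'', because the term $A_0(x^2)=-p_1$ is linear in $A_0$. Under your ``signed'' reading you would get $A_1=(c_1^2+p_1)/2=c_2+p_1$, which contradicts both the recursion and the displayed $f_1$. What actually happens is that the squares forget the signs: $\sum_r a_r^2=\sum_r x_r^2=p_1=E_0(p)$, so $A_1=(E_0^2-E_0(p))/2=E_1$. Thus the step from $i=0$ to $i=1$ must be done by this direct computation, and the identity $\sum_r a_r^2=A_i(x^2)$ holds only for $i\ge1$.

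Once the integral equality $A_i=E_i$ for $i\ge1$ is established, the rest of your argument is fine. Your deductions of $\widetilde\rho(f_i)=\overline{E_i}$, of $\rho(\varphi_i)=\epsilon(E_i)$ via $c_1=0$ in $\Ch(BT)$, and of the two special values all go through.
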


\begin{proof}
The recursion gives $A_{i+1}=\sum_{r<s}a_ra_s$. Since
$A_0=-c_1=-\sum_jx_j$, one has
$A_1=\sum_{r<s}x_rx_s=c_2=E_1$. Since $A_1$ has nonnegative $x$-monomial coefficients, the recurrence
shows by induction that the same is true of every $A_i$ with $i\ge1$.
Hence the $a_r$ in the coefficient expansion above are monomials with
coefficient $1$, counted with multiplicity, and
$\sum_r a_r^2=A_i(x_1^2,\ldots,x_m^2)$. Therefore
\[
  A_{i+1}=
  (A_i(x_1,\ldots,x_m)^2-A_i(x_1^2,\ldots,x_m^2))/2
  \qquad(i\ge1).
\]
Suppose that $A_i$ is obtained from $E_i$ by substituting
$c_j(x_1,\ldots,x_m)$ for $c_j$. Then
$A_i(x_1^2,\ldots,x_m^2)$ is obtained from $E_i(p)$ by the same
substitution, since
$c_j(x_1^2,\ldots,x_m^2)=p_j$. By
\eqref{eq:clifford-recurrence}, it follows that $A_{i+1}$ is obtained
from $E_{i+1}$ by the same substitution. Since $A_1$ is obtained from $E_1=c_2$ by this substitution, induction
proves the identification for every $i\ge1$. Since
$f_i=2zg_i+A_i$, reduction modulo two gives
$\widetilde\rho(f_i)=\overline{E_i}$ for $i\ge1$, while
$f_0=2z-c_1\equiv c_1=E_0\pmod2$. Finally, restriction to $T$ imposes
$c_1=0$ modulo two, so $\rho(\varphi_i)=\epsilon(E_i)$. The final
assertions follow from \eqref{eq:first-Ei}.
\end{proof}

\begin{lemma}\label{lem:clifford-low-indices}
The following statements hold.
\begin{enumerate}
\item[(i)] If $m\ge4$, then
$\rho(\varphi_3)=c_8+c_2c_6+c_3c_5+c_4^2+c_2c_3^2$.
If $3\le s(n)$, then $\varphi_3\notin\im\Phi_n$.

\item[(ii)] For $n=11$, one has $\varphi_4\notin\im\Phi_{11}$.
\end{enumerate}
\end{lemma}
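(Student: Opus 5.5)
The plan for (i) is a direct expansion followed by a single $\St^1$ obstruction. For (ii) we use the same method but must compute by hand in rank $5$, because the obstruction from Lemma~\ref{lem:common-coefficient} involves $c_6$, which vanishes in that rank.

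\emph{The formula in (i).} By Lemma~\ref{lem:clifford-reduction}, $\rho(\varphi_3)=\epsilon(E_3)$, and $E_3$ is given explicitly in \eqref{eq:first-Ei}. I will expand $p_1p_3-p_4$ using \eqref{eq:pj-cj}:
\[
  p_1p_3-p_4=(c_1^2+2\delta_1)(c_3^2+2\delta_3)-c_4^2-2\delta_4 .
\]
Subtracting this from $(c_1c_3-c_4)^2$ and halving gives
\[
  E_3=-c_1c_3c_4+c_4^2-c_1^2\delta_3-c_3^2\delta_1-2\delta_1\delta_3+\delta_4 .
\]
Reduce modulo $2$ and set $c_1=0$. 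Since $\epsilon(\delta_1)=c_2$ and $\epsilon(\delta_4)=c_8+c_2c_6+c_3c_5$, the stated formula follows. In rank $m$ the terms $c_j$ with $j>m$ simply drop.

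\emph{The non-inclusion in (i).} Apply $\St^1$ using \eqref{eq:St1-cj} and \eqref{eq:cartan}. The Cartan formula kills $\St^1$ of any square. So $\St^1(c_4^2)=0$, and
\[
  \St^1(c_2c_3^2)=c_3\cdot c_3^2=c_3^3 .
\]
Together with \eqref{eq:Q3}, this gives
\[
  \St^1(\rho(\varphi_3))=c_9+c_3c_6+c_2c_7+c_3^3 .
\]
The monomial $c_3^3$ survives truncation for every $m\ge4$. Hence this is a nonzero odd-degree element of $R_m$. Now split by the parity of $n$, using $3\le s(n)$:
\begin{itemize}
\item If $n$ is odd, then $m\ge4$. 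If $4\mid n$, the bound $s(n)\ge3$ gives $m\ge5$. In either case Lemma~\ref{lem:odd-image-constraint} excludes it from $\im\rho$ outright.
\item If $n=2m$ with $m$ odd, then $m\ge5$. The coefficient of $c_3^3$ shows the element is not in $c_mR_m$, so the same lemma again excludes it.
\end{itemize}
Then \eqref{eq:steenrod-stability-criterion} gives $\varphi_3\notin\im\Phi_n$.

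\emph{Setting up (ii).} Here $n=11$ and $m=5$. Since $E_4=-(E_3(p)-E_3^2)/2$ and signs are irrelevant modulo $2$, we have $\rho(\varphi_4)=\mathcal D(E_3)$. Lemma~\ref{lem:common-coefficient} applies to $H_i=E_i$, but its detected monomial $c_5c_6^2$ vanishes in rank $5$. So I will compute $\mathcal D(E_3)$ directly modulo $c_j$ for $j\ge6$.
\begin{itemize}
\item Write the monomials of $E_3$ in $c_1,\dots,c_8$ from the expansion above.
\item Expand using \eqref{eq:Hi-decomposition} and \eqref{eq:M-expansion}, keeping only the contributions of the forms $\alpha_Ma_r\delta_r(M/c_r)^2$ and $\alpha_M\alpha_NMN$ that survive $c_1=0$ and truncation at $c_5$.
\end{itemize}
This yields an explicit degree-$16$ polynomial in $c_2,\dots,c_5$.

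\emph{Finishing (ii).} Apply $\St^1$ and look for a surviving odd-degree monomial, for instance one involving $c_3^3$ or $c_3c_5\cdot(\text{square})$. Since $n$ is odd, Lemma~\ref{lem:odd-image-constraint} then forbids any nonzero odd-degree element of $R_5$ in $\im\rho$, and \eqref{eq:steenrod-stability-criterion} finishes the argument.

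\emph{Main obstacle.} The hard step is this rank-$5$ bookkeeping. As with $\rho(q_3)=c_3c_5$ for $n=11$, $\St^1$ may vanish after truncation. If it does, I will try $\St^3$ and then $\St^7$ via \eqref{eq:steenrod-cj}, choosing a single target monomial and tracking only its coefficient, as in Lemma~\ref{lem:common-coefficient}.
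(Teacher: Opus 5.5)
Your part (i) coincides with the paper's argument. You use the same expansion $E_3=-c_1c_3c_4+c_4^2-c_1^2\delta_3-c_3^2\delta_1-2\delta_1\delta_3+\delta_4$ and obtain the same class $\St^1(\rho(\varphi_3))=c_9+c_3c_6+c_2c_7+c_3^3$. You also use $c_3^3$ in the same way, to rule out divisibility by $c_m$ when $n=2m$ with $m$ odd.

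\textbf{Gap in part (ii).} Part (ii), however, is not proved. Its whole content is that some odd Steenrod component of $\rho(\varphi_4)$ is nonzero in rank $5$. You neither compute $\rho(\varphi_4)$ nor exhibit a surviving monomial, and you leave open that $\St^1$ might vanish and that $\St^3$ or $\St^7$ might be needed. Your correct remark that Lemma~\ref{lem:common-coefficient} only detects $c_5c_6^2$, which dies in rank $5$, is exactly why a direct computation is unavoidable. So the proposal stops where the proof has to start.

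The monomials you suggest looking for are also not the ones that survive. Carrying out your plan, in rank $5$ one has
\[
  E_3=-c_1c_3c_4+c_4^2+c_1^2c_2c_4-c_1^3c_5+c_2c_3^2-2c_2^2c_4+2c_1c_2c_5-c_3c_5 .
\]
Then \eqref{eq:Hi-decomposition} yields three groups of terms:
\begin{itemize}
\item the $\delta$-terms $c_2c_3^2c_4^2+c_3^4c_4+c_2c_4c_5^2$, from $M=c_1c_3c_4$, $c_2c_3^2$, $c_3c_5$;
\item the cross terms $c_2c_3^2c_4^2+c_3c_4^2c_5+c_2c_3^3c_5$;
\item the square terms $c_3^2c_5^2+c_2^4c_4^2$.
\end{itemize}
Your list of contributions omits the square terms. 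This is harmless, since $\St^\ell$ kills squares for odd $\ell$. Hence
\[
  \rho(\varphi_4)=c_2^4c_4^2+c_2c_3^3c_5+c_2c_4c_5^2+c_3^4c_4+c_3^2c_5^2+c_3c_4^2c_5 .
\]
Under $\St^1$, the two copies of $c_3^4c_5$, coming from $c_3^4c_4$ and $c_2c_3^3c_5$, cancel. What remains is
\[
  \St^1(\rho(\varphi_4))=c_2c_5^3+c_3c_4c_5^2\ne0 .
\]
This arises entirely from the term $c_2c_4c_5^2=\epsilon(\delta_3)c_5^2$ contributed by $M=c_3c_5$.

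With this computation supplied, your final step finishes (ii) exactly as in the paper: Lemma~\ref{lem:odd-image-constraint} for odd $n$, then \eqref{eq:steenrod-stability-criterion}. Without it, (ii) is only a strategy.
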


\begin{proof}
For (i), Lemma~\ref{lem:clifford-reduction},
\eqref{eq:first-Ei}, and \eqref{eq:pj-cj}, together with
$\delta_1=-c_2$ and
$\epsilon(\delta_4)=c_8+c_2c_6+c_3c_5$, give
\[
  \rho(\varphi_3)
  =\epsilon(E_3)
  =\epsilon\!\left(c_4^2-c_1c_3c_4-c_1^2\delta_3-\delta_1c_3^2-2\delta_1\delta_3+\delta_4\right)
  =c_8+c_2c_6+c_3c_5+c_4^2+c_2c_3^2.
\]
Equations \eqref{eq:St1-cj} and \eqref{eq:cartan} yield
$\St^1(\rho(\varphi_3))=c_9+c_3c_6+c_2c_7+c_3^3\ne0$.
If $n$ is odd or $4\mid n$, Lemma~\ref{lem:odd-image-constraint}
shows that this nonzero odd-degree class does not lie in $\im\rho$.
If $n=2m$ with $m$ odd, then $3\le s(n)$ implies $m\ge5$, and the
term $c_3^3$ shows that the class is not divisible by $c_m$; the same
lemma again excludes it from $\im\rho$. Hence the grading of $\im\rho$ and
\eqref{eq:steenrod-stability-criterion} prove (i).

For (ii), since $m=\lfloor 11/2\rfloor=5$, substituting
$p_j=c_j^2+2\delta_j$ into
$E_4=(E_3^2-E_3(p))/2$, then setting $c_j=0$ for $j>5$ and reducing
modulo two, gives
\[
  \rho(\varphi_4)
  =\epsilon(E_4)
  =\epsilon\!\left(\frac{E_3^2-E_3(p)}{2}\right)
  =c_2^4c_4^2+c_2c_3^3c_5+c_2c_4c_5^2+c_3^4c_4+c_3^2c_5^2+c_3c_4^2c_5.
\]
Equations \eqref{eq:St1-cj} and \eqref{eq:cartan} yield
$\St^1(\rho(\varphi_4))=c_2c_5^3+c_3c_4c_5^2\ne0$.
Since $n=11$ is odd, Lemma~\ref{lem:odd-image-constraint}, the grading
of $\im\rho$, and \eqref{eq:steenrod-stability-criterion} prove (ii).
\end{proof}

\begin{lemma}\label{lem:clifford-seven-exception}
For $n=7$, the fixed recursive invariant $f_2$ belongs to
$\im\widetilde\Phi_7$.
\end{lemma}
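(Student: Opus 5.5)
Following the introduction, we combine the fourth Chern class of the spin representation with the torsion index of $\Gamma^+(7)$, mirroring the proof of Lemma~\ref{lem:spin10-input}.

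\emph{Step 1: the reduction of $f_2$.} In rank $m=3$ we have $c_4=0$. By Lemma~\ref{lem:clifford-reduction} and \eqref{eq:first-Ei},
\[
  \widetilde\rho(f_2)=\overline{E_2}=\overline{c_1c_3},
\]
viewed in $\Ch(B\widetilde T)=\F[z,x_1,x_2,x_3]$.

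\emph{Step 2: a class in the image with the same reduction.} Let $V$ be the $8$-dimensional spin representation of $\Gamma^+(7)$. Its weights are $z-\sum_{k\in I}x_k$ for $I\subset\{1,2,3\}$, possibly up to a sign convention or a shift by $z$, which we would fix from the definition of $\widetilde T$. The class $c_4(V)$ lies in $\im\widetilde\Phi_7$ and equals $e_4$ of these eight weights. Reducing modulo $2$, we would compute $e_4$ of the weights $z+\sum_{k\in I}x_k$ by pairing $I$ with its complement. Writing $y_I=z+\sum_{k\in I}x_k$, the paired weights satisfy $y_I+y_{I^c}=c_1$ modulo $2$. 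Then $\prod_{I}(1+y_It)=\prod_{\text{pairs}}(1+c_1t+y_Iy_{I^c}t^2)$, and the coefficient of $t^4$ should be extracted as a polynomial in $z$ and the $c_j$. If $\overline{c_4(V)}$ is not exactly $\overline{c_1c_3}$, we would correct it using further elements of the image whose reductions are known: the reductions of Chern classes of $V$ and of the vector representation (whose even Chern classes give $p_j\equiv c_j^2$), together with $\St^1$ and $\St^2$ applied to $c_2(V)$ and $c_3(V)$ via naturality. Each such correction is lifted to an integral class in $\CH(B\widetilde G)$ through the reduction map, exactly as $c_3c_5$ was lifted in Lemma~\ref{lem:spin10-input}. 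The aim is an $a\in\CH^4(B\widetilde G)$ with $\widetilde\rho(\widetilde\Phi_7(a))=\overline{c_1c_3}$.

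\emph{Step 3: conclude with the torsion index.} Put $d=f_2-\widetilde\Phi_7(a)$. Then $d\equiv0\pmod 2$, so $d=2y$. Since $\CH(B\widetilde T)$ is torsion-free and $d$ is $W$-invariant, $y$ is $W$-invariant. The torsion index of $\Gamma^+(7)$ equals that of $\Spin(7)$, namely $2$ (\cite{TotaroTorsion}; it is unchanged by central extension by $\mathbb G_m$). It therefore annihilates the cokernel of $\widetilde\Phi_7$, so $d\in\im\widetilde\Phi_7$ and hence $f_2\in\im\widetilde\Phi_7$.

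\emph{Main obstacle.} The hard part is Step 2: computing $\overline{c_4(V)}$ modulo $2$ with the paper's conventions for $z$ and matching it to $\overline{c_1c_3}$. There may be discrepancy terms such as $\overline{z^2c_2}$ or $\overline{z^4}$, which then have to be removed by other image classes. I would first try $c_4(V)$ plus $\overline{c_2(V)}^2$ and the product of $c_2$ of the vector representation with $c_2(V)$, and check whether the leftover vanishes.
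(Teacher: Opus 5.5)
\textbf{There is a genuine gap: Step~2 is never carried out.} Your Steps~1 and~3 are correct, and your overall strategy is the same as the paper's: use $c_4$ of the eight-dimensional spin representation $\Delta$ of $\Gamma^+(7)$, then absorb the remainder with the torsion index~$2$. But Step~2 carries the whole content of the lemma, and you only state it as an aim and hope the discrepancy can be removed. As written, the proposal never shows that some $a\in\CH^4(B\widetilde G)$ satisfies $\widetilde\rho(\widetilde\Phi_7(a))=\overline{c_1c_3}$.

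The corrections you would try first cannot work. Pair $I$ with $I^c$ and put $b_r=\lambda_I\lambda_{I^c}$. Then $c_t(\Delta|_{\widetilde T})=\prod_{r=1}^4(1+f_0t+b_rt^2)$ with $\sum_r b_r=2f_1$. This gives $c_2(\Delta)=6f_0^2+2f_1$ and $c_3(\Delta)=4f_0^3+6f_0f_1$, and both are $\equiv0\pmod 2$. Hence $\overline{c_2(\Delta)}^{\,2}$, $c_2(\mathrm{vec})\,c_2(\Delta)$, and any Steenrod operation applied to $c_2(\Delta)$ or $c_3(\Delta)$ all reduce to zero and change nothing.

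\textbf{How to fill the gap.} Write $w=z(z-c_1)$. Then the $b_r$ are $w$ and $w+x_j(c_1-x_j)$ for $j=1,2,3$. A direct expansion in rank~$3$, where $c_4=0$, gives $\sum_{r<s}b_rb_s=f_2+f_1^2$. Therefore
\[
  c_4(\Delta)|_{\widetilde T}=f_2+f_1^2+6f_0^2f_1+f_0^4,
  \qquad
  \overline{c_4(\Delta)}=\overline{c_1c_3}+\overline{c_2}^{\,2}+\overline{c_1}^{\,4}.
\]
The extra terms are removed by two further image classes:
\begin{itemize}
\item $c_4$ of the vector representation, which restricts to $p_2\equiv c_2^2$;
\item the fourth power of the first Chern class of the quotient character $\Gamma^+(7)\to\mathbb G_m$, which restricts to $f_0\equiv c_1$ (alternatively use $p_1^2\equiv c_1^4$, with $p_1$ coming from $-c_2$ of the vector representation).
\end{itemize}
The resulting combination restricts to $3f_2+6f_0^2f_1$, whose reduction is $\overline{c_1c_3}$, so your Step~3 then finishes the proof.

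The paper does the same computation integrally instead of modulo~$2$. It reads $f_2$ off the displayed formula, after noting that $f_1^2=2f_2+p_2$ and $6f_0^2f_1=3f_0^2\cdot 2f_1$ lie in $\im\widetilde\Phi_7$ because $2\CH(B\widetilde T)^W\subset\im\widetilde\Phi_7$.
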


\begin{proof}
Let $m=3$ and let $(V,q)$ be the split seven-dimensional quadratic
space. By
\cite[Proposition~8.3 and the proof of Proposition~8.4]{BookOfInvolutions},
$C_0(V,q)\simeq\operatorname{End}_k(S)$ for an eight-dimensional vector space $S$.
Since
$\widetilde G=\Gamma^+(V,q)\subset\operatorname{GL}_1(C_0(V,q))$
contains $\Spin(V,q)$ \cite[\S23.A]{BookOfInvolutions}, its action on
$S$ defines an algebraic representation $\Delta$ extending the spin
representation; the scalar subgroup $\mathbb G_m$ acts by scalars.
In the coordinates of \cite[\S2]{KarpenkoEnvelopes}, its
$\widetilde T$-weights are
$\lambda_I=z-\sum_{i\in I}x_i$ for $I\subset\{1,2,3\}$; after
restriction to $T$, the relation $2z=c_1$ identifies them with the
usual spin weights. Pairing $I$ with its
complement, write $b_1,\ldots,b_4$ for the four products
$\lambda_I\lambda_{I^c}$. Explicitly, they are $z(z-c_1)$ and
$(z-x_j)(z-c_1+x_j)$ for $j=1,2,3$. Since
$\lambda_I+\lambda_{I^c}=f_0$, one has
\[
  c_t(\Delta|_{\widetilde T})
  =\prod_{I\subset\{1,2,3\}}(1+\lambda_I t)
  =\prod_{r=1}^4(1+f_0t+b_rt^2).
\]
Summing the four products gives $\sum_{r=1}^4b_r=2f_1$, while the sum of
their six pairwise products is $\sum_{r<s}b_rb_s=f_2+f_1^2$. Hence
\begin{equation}\label{eq:gspin7-c4}
  c_4(\Delta)|_{\widetilde T}
  =f_0^4+3f_0^2\sum_{r=1}^4b_r+\sum_{r<s}b_rb_s
  =f_2+f_1^2+6f_0^2f_1+f_0^4.
\end{equation}
By naturality,
$c_4(\Delta)|_{\widetilde T}
=\widetilde\Phi_7(c_4(\Delta))\in\im\widetilde\Phi_7$.

For a Borel subgroup $\widetilde B\subset\Gamma^+(7)$ and
$B=\widetilde B\cap\Spin(7)$, one has
$\Gamma^+(7)/\widetilde B\simeq\Spin(7)/B$; hence the two groups have
the same torsion index, namely $2$. Thus
\cite[Theorems~0.1 and~1.3(1)]{TotaroTorsion} give
$2\CH(B\widetilde T)^W\subset\im\widetilde\Phi_7$.
Since $f_0$ is the first Chern class of the quotient character,
$p_2$ comes from the vector representation, and
$f_1^2=2f_2+p_2$ by \eqref{eq:fi-square-minus-square}, the classes
$f_1^2$, $6f_0^2f_1=3f_0^2(2f_1)$, and $f_0^4$ lie in the image.
Hence \eqref{eq:gspin7-c4} yields
$f_2\in\im\widetilde\Phi_7$.
\end{proof}

\begin{remark}
For $n=7$, write $\xi$ for the class defined in
\cite[(2.1)]{KarpenkoEnvelopes}. In rank $3$,
\[
  \xi
  =z(z-x_1)(z-x_2)(z-x_1-x_2)
  =z^4-z^2c_2+zc_3,
  \qquad \varphi_2=q_2=2\xi.
\]
Thus the class $f_2$ lifted in
Lemma~\ref{lem:clifford-seven-exception} restricts to $2\xi$, not to
$\xi$, consistently with \cite[Theorem~2.2]{KarpenkoEnvelopes}.
\end{remark}

\begin{proof}[Proof of Theorem~\ref{thm:clifford-classification}]
We first show that $\varphi_i\notin\im\Phi_n$ for every
$(n,i)\ne(7,2)$. For $i=1$, use $\varphi_1=-q_1$ and
Theorem~\ref{thm:classification}. For $i=2$, the identity
$\varphi_2=q_2$ and the same theorem apply for $n\ge9$; the only remaining
cases in the finite range are $n=7,8$, of which the first is exceptional and the
second satisfies
$\St^3(\rho(\varphi_2))=c_3c_4\notin\im\rho$ by
Lemma~\ref{lem:odd-image-constraint}. Hence
\eqref{eq:steenrod-stability-criterion} gives
$\varphi_2\notin\im\Phi_8$. The case $i=3$ and the boundary case
$(n,i)=(11,4)$ are covered by
Lemma~\ref{lem:clifford-low-indices}(i) and (ii), respectively. In every
other case with $i\ge4$, the condition $i\le s(n)$ implies $m\ge6$. By \eqref{eq:clifford-recurrence},
$\mathcal D(E_j)=\epsilon(-E_{j+1})=\epsilon(E_{j+1})$. Hence
Lemma~\ref{lem:common-coefficient}, Lemma~\ref{lem:clifford-reduction},
and $[c_2c_6]\rho(\varphi_3)=1$ give
\[
  [c_3c_6^{(2^i-2)/6}]\St^1(\rho(\varphi_i))=1\quad(i\text{ odd}),
  \qquad
  [c_5c_6^{(2^i-4)/6}]\St^1(\rho(\varphi_i))=1\quad(i\text{ even}).
\]
Since the displayed monomials involve only $c_3,c_5,c_6$ and $m\ge6$,
their coefficients are unchanged after setting $c_j=0$ for $j>m$.
Thus $\St^1(\rho(\varphi_i))$ is a nonzero homogeneous class of odd
degree. Lemma~\ref{lem:odd-image-constraint} excludes this class from
$\im\rho$; when $n=2m$ with $m$ odd, one has $m\ge7$, and the displayed
monomial is not divisible by $c_m$. Hence
\eqref{eq:steenrod-stability-criterion} gives
$\varphi_i\notin\im\Phi_n$.

Now the inclusions $G=\Spin(n)\subset\widetilde G=\Gamma^+(n)$ and
$T\subset\widetilde T$ give the commutative restriction square
\[
\begin{CD}
  \CH(B\widetilde G) @>{\widetilde\Phi_n}>> \CH(B\widetilde T)^W\\
  @VVV @VVV\\
  \CH(BG) @>{\Phi_n}>> \CH(BT)^W.
\end{CD}
\]
Thus $f_i\in\im\widetilde\Phi_n$ would imply
$\varphi_i\in\im\Phi_n$, proving all non-inclusions. The exceptional
inclusion is Lemma~\ref{lem:clifford-seven-exception}.
\end{proof}


\begin{thebibliography}{99}

\bibitem{AntieauTate}
B.~Antieau,
\emph{On the integral Tate conjecture for finite fields and representation
theory},
Algebr. Geom. \textbf{3} (2016), no.~2, 138--149.

\bibitem{AtiyahHirzebruch}
M.~F. Atiyah and F.~Hirzebruch,
\emph{Analytic cycles on complex manifolds},
Topology \textbf{1} (1962), 25--45.

\bibitem{BensonWood}
D.~J. Benson and J.~A. Wood,
\emph{Integral invariants and cohomology of $B\!\Spin(n)$},
Topology \textbf{34} (1995), no.~1, 13--28.

\bibitem{BorelSteenrod}
A. Borel,
\emph{La cohomologie mod $2$ de certains espaces homog\`enes},
Comment. Math. Helv. \textbf{27} (1953), 165--197.

\bibitem{Brosnan}
P. Brosnan,
\emph{Steenrod operations in Chow theory},
Trans. Amer. Math. Soc. \textbf{355} (2003), no.~5, 1869--1903.

\bibitem{EkedahlApproximation}
T.~Ekedahl,
\emph{Approximating classifying spaces by smooth projective varieties},
arXiv:0905.1538v1 [math.AG], 2009.

\bibitem{KamekoCycleMap}
M.~Kameko,
\emph{Representation theory and the cycle map of a classifying space},
Algebr. Geom. \textbf{4} (2017), no.~2, 221--228.

\bibitem{KarpenkoClassifying}
N.~A. Karpenko,
\emph{On classifying spaces of spin groups},
Results Math. \textbf{77} (2022), no.~4, Paper No.~144.

\bibitem{KarpenkoEnvelopes}
N.~A. Karpenko,
\emph{Envelopes and classifying spaces},
Math. Nachr. \textbf{296} (2023), no.~10, 4769--4777.

\bibitem{KarpenkoModuloTorsion}
N.~A. Karpenko,
\emph{On characteristic classes modulo torsion for spin groups},
J. Algebra Appl. \textbf{24} (2025), no.~10, 2550236.

\bibitem{KarpenkoSpecialClifford}
N.~A. Karpenko,
\emph{On special Clifford groups and their characteristic classes},
Ricerche Mat. \textbf{74} (2025), 449--470.

\bibitem{KarpenkoMerkurjevSpin}
N.~A. Karpenko and A.~S. Merkurjev,
\emph{Indexes of generic Grassmannians for spin groups},
Proc. Lond. Math. Soc. (3) \textbf{125} (2022), no.~4, 825--840.

\bibitem{BookOfInvolutions}
M.-A. Knus, A. Merkurjev, M. Rost, and J.-P. Tignol,
\emph{The Book of Involutions},
American Mathematical Society Colloquium Publications, vol.~44,
American Mathematical Society, Providence, RI, 1998.

\bibitem{PirutkaYagita}
A.~Pirutka and N.~Yagita,
\emph{Note on the counterexamples for the integral Tate conjecture over
finite fields},
Doc. Math. Extra Vol. (2015), 501--511.

\bibitem{Primozic}
E. Primozic,
\emph{Motivic Steenrod operations in characteristic $p$},
Forum Math. Sigma \textbf{8} (2020), Paper No.~e52, 25 pp.

\bibitem{TotaroCobordism}
B.~Totaro,
\emph{Torsion algebraic cycles and complex cobordism},
J. Amer. Math. Soc. \textbf{10} (1997), no.~2, 467--493.

\bibitem{TotaroChow}
B. Totaro,
\emph{The Chow ring of a classifying space},
in: Algebraic $K$-theory (Seattle, 1997), Proc. Sympos. Pure Math.
\textbf{67}, Amer. Math. Soc., Providence, RI, 1999, 249--281.

\bibitem{TotaroTorsion}
B. Totaro,
\emph{The torsion index of the spin groups},
Duke Math. J. \textbf{129} (2005), no.~2, 249--290.

\bibitem{WuSquares}
W.~T. Wu,
\emph{On squares in Grassmannian manifolds},
Acta Sci. Sinica \textbf{2} (1953), 91--115.

\end{thebibliography}
\end{document}